\numberwithin{equation}{section} \allowdisplaybreaks
\theoremstyle{plain}
\newtheorem{thm}{Theorem}[section]
\theoremstyle{remark}
\newtheorem{remark}{Remark}
\numberwithin{equation}{section}
\newtheorem{Lemma}[thm]{Lemma}
\newtheorem{prop}[thm]{Proposition}
\newtheorem{assumption}{Assumption}
\newtheorem{corollary}[thm]{Corollary}
\begin{document}
\title{\bf{Q-Hermite polynomials chaos approximation of likelihood function based on q-Gaussian prior in Bayesian inversion}}

\author{\hspace{-2.5cm}{\small Zhi-Liang Deng$^{1}$ Xiao-Mei Yang$^2$\thanks{
Corresponding author: yangxiaomath@163.com; yangxiaomath@home.swjtu.edu.cn
Supported by NSFC No. 11601067, 11771068 and No.11501087, the Fundamental Research Funds for the Central Universities No. 2682018ZT25 and ZYGX2018J085.
},} \\
\hspace{-1.5cm}{\scriptsize $1.$ School of Mathematical Sciences,  University of Electronic Science and Technology of China,
Chengdu 610054, China}\\
\hspace{-2.5cm}{\scriptsize $2.$ School of Mathematics,
Southwest Jiaotong University,
Chengdu 610031, China}
}
\date{}
\maketitle

\begin{abstract}
\noindent In real applications, the construction of prior and acceleration of sampling for posterior are usually two key points of Bayesian inversion algorithm for engineers. In this paper, q-analogy of Gaussian distribution, q-Gaussian distribution, is introduced as the prior of inverse problems.  And an acceleration algorithm based on spectral likelihood approximation is discussed. We mainly focus on the convergence of the posterior distribution in the sense of Kullback-Leibler divergence
 when approximated likelihood function and truncated prior distribution are used. Moreover, the convergence in the sense of total variation and Hellinger metric is obtained. 
In the end two numerical examples are displayed.

\noindent \textbf{Key words:} q-Gaussian prior; q-Hermite polynomials; Spectral likelihood approximation; Kullback-Leibler divergence

\noindent \textbf{MSC 2010}: 65R32, 65R20
\end{abstract}

\section{Introduction}
Bayesian approach has been widely applied to inverse models and parameter inference models \cite{dashti,kaipio,stuart,stuart1,tarantola,marzouk1,marzouk2,yan1,yan2,yan3,yang}. It provides a handy framework for the data analysis in the real engineering problems \cite{beck,hadidi,yuen} based on Bayes theorem,
\begin{align}\label{in1.1}
\mu^y(dx)=\frac{f(y| x)\mu(dx)}{\int f(y| x)\mu(dx)},
\end{align} 
where the distribution $\mu$ characterizes prior
knowledge about the unknown parameter $x$ and $f(y| x)$ determines the likelihood function. In real applications, two key aspects of Bayesian method perplex engineers and researchers: the prior distribution and acceleration of simulation. Firstly, the prior information is coded before obtaining the measured data. This means that one needs to have the first understanding for the unknown parameter and make some survey according to their own experience and actual situation. However, this is usually a challenging task in some real problems, e.g., reservoir, non-destructive inspection, CT etc. This prompts us to explore broader and more suitable  prior distributions for specific problems. In \cite{stuart,stuart1}, some prior distributions, e.g., Gauss, uniform, Besov prior, have been discussed for ill-posed operator equations. We will introduce q-Gauss distribution, q-analogy of Gauss distribution, into the research of inverse problems in this paper. In what follows, more information of q-Gauss distribution will be provided.
 On the other hand, in the Bayesian framework parameters are frequently estimated by Markov chain Monte Carlo(MCMC) sampling techniques which typically have slow convergence. In fact, MCMC methods evaluate sequentially the posterior probability density at many different points in the parameter space, in which the forward model needs to be solved  for each sample parameter to determine the likelihood function. This requires a computationally intensive undertaking (e.g., the solution of a system of PDEs).
%
 Therefore, numerical acceleration algorithm is key for real applications. A kind of important acceleration methods is to reduce computational cost in solving a statistical inverse problems \cite{frangos}: reducing the cost of forward simulations, reducing the dimension of the input space and reducing the number of samples. 

Q-Gaussian distribution, as an analogy to Gaussian distribution, has been discussed by many authors (see \cite{bryc,bozejko1,bozejko2,leeuwen} and references there), and widely used in quantum physics \cite{simon1,simon2}. In classical probability, the central limit theorem shows that the standardized sum of $n$ classically independent identifically distributed random variables converges to a Gaussian random variable as $n$ goes to infinity. This process depends on the commutative notion of independence. However, this conventional commutative relation is unsuitable for some real applications. Some necessary extensions need to be done.
In \cite{bozejko1},  Bo\.{z}jko and Speicher generalize the commutative independence in a deformation of Brownian motion by introducing a parameter $q\in [-1, 1]$. When $q=1$, it is the classical case, $q=-1$  the anti-commutative independence and $q=0$ the free independence. These commutative notions are used to characterize some quantum physics phenomena \cite{maassen,meyer,speicher,voiculescu}. 
The density function of q-Gaussian distribution is represented by an infinite series and this truncation error of the partial sum series is discussed in \cite{szablowski}. 
 Compared with classical Gaussian random variables, q-Gaussian variables for $-1<q<1$ are bounded, with which we can depict some bounded physical parameters, e.g., the diffusion coefficients in heat conduction problems, the order of fractional diffusion equation.  In addition, with big $q$ (greater than some constant $q_0$), the density functions of q-Gaussian distribution are unimodal. While $q$ is small, they are bimodal. Bimodal probability distributions have important applications in economic, natural problems. For more discussions about q-Gaussian distribution, one can refer to \cite{bozejko1,bozejko2,szablowski}.


Numerical acceleration has  been always concerned by scientists and engineers.  As stated above, in statistical inference problems, the main acceleration ideas include improving the sampling efficiency, reducing the dimensionality of input parameter and reducing the evaluation cost of the forward problem. In improving sampling efficiency, one can see \cite{christen,cui,higdon,efendiev}. In \cite{cui}, Cui et al integrate the reduced-order model construction process into an adaptive MCMC algorithm, in which the reduced-order model is used to increase the efficiency of MCMC sampling.
For reducing the dimensionality, ad hoc method is to expand the unknown parameter in its Karhunen-Lo\'{e}ve expansion according to the given prior knowledge \cite{dashti,stuart} and truncate the expansion into the  partial sum. The  expansion coefficients  of the truncation series are viewed as the substitute of the unknown. 
For reducing the evaluation cost of the forward problem, one usually tries to transform complex forward models into a simplified or coarsed version, e.g., model reduction method, or construct an approximation or 'surrogate' of the forward problem. A lot of research has been devoted to these fields, for instance,  some model reduction and surrogate based approaches \cite{arridge,frangos,galbally,jin,lieberman,manzoni}, generalized polynomial chaos (gPC) methods \cite{marzouk1,marzouk2,marzouk3,xiu1,xiu2,yan1,yan2} and Gaussian process regression method \cite{kennedy,rasmussen,stuart2}. 
 Recently, on the basis of surrogate method of forward model, some authors propose a 'more-direct' surrogate algorithm, spectral likelihood approximation method \cite{nagel}. This approach does not replace the forward model directly, but replace the likelihood function with the orthogonal polynomials expansion. By this approach, the polynomial chaos expansion (PCE) has clearer explanation in mathematics.
 In this paper, we consider a spectral likelihood approximation approach based on q-Hermite polynomials, which are orthogonal with q-Gaussian distribution weight. 

This paper is organized by the following: In Section 2, we introduce some basic 
knowledge about q-Gaussian distribution and q-Hermite polynomials. We give a convergence rate for the truncation q-Hermite polynomial expansion. In Section 3, the Bayesian inversion based on q-Gaussian prior is stated. 
 In Section 4, we consider polynomial chaos expansion of likelihood function based on q-Hermite polynomials.
 In Section 5, we analyze the Kullback-Leibler divergence in two approximation process: the likelihood and the prior approximation.
Two numerical examples are given in Section 6.






\section{Preliminaries}
In this section, we first give some basic 
conceptions and notations and  then analyze the convergence rate for truncated q-Hermite polynomial expansion. We just discuss 1-dimensional case in this section. For multi-dimensional case, it is a direct extension. 

Denote for $n\in \mathbb{N}_0$ and $-1<q<1$
\begin{align}\label{qh2.1}
&[n]_q:=\frac{1-q^n}{1-q}=1+q+\cdots+q^{n-1},\,\, [0]_q:=0,\\
&(a; q)_n=\prod_{k=0}^{n-1}(1-aq^k).
\end{align}
The density function $f^{(q)}(x)$ of q-Gaussian \cite{bozejko2} is supported by the interval $[-\frac{2}{\sqrt{1-q}}, \frac{2}{\sqrt{1-q}}]$, on which
\begin{align}\label{qha2.3}
f^{(q)}(x)=\frac{1}{\pi}\sqrt{1-q}\sin\theta\prod\limits_{n=1}^\infty (1-q^n)|1-q^ne^{2i\theta}|^2
\end{align}
with $x=\frac{2}{\sqrt{1-q}}\cos\theta$,  $\theta\in(0, \pi)$ and $i=\sqrt{-1}$. 
The density function $f^{(q)}(x)$ has the following expansion and the truncated error estimation \cite{szablowski} :
\begin{Lemma}\label{lemma2.1}
For $-1<q<1$, one has for $x\in [-\frac{2}{\sqrt{1-q}}, \frac{2}{\sqrt{1-q}}]$
\begin{align}\label{qh_a2.6}
f^{(q)}(x)=\frac{\sqrt{1-q}}{2\pi}\sqrt{4-(1-q)x^2}\sum_{k=1}^\infty
(-1)^{k-1}q^{\left(\begin{array}{c}k \\2\end{array}\right)}T_{2k-2}
(\frac{x\sqrt{1-q}}{2}),
\end{align}
where $T_k(x)$ is Chebyshev polynomial of  the second kind defined by
\begin{align*}
T_k(x)=\frac{\sin((k+1)\arccos x)}{\sqrt{1-x^2}}.
\end{align*}
Denote
\begin{align}\label{qh_a2.7}
f^{(q)}_J(x):=\frac{\sqrt{1-q}}{2\pi}\sqrt{4-(1-q)x^2}
\sum_{k=1}^{J-1}
(-1)^{k-1}q^{\left(\begin{array}{c}k \\2\end{array}\right)}T_{2k-2}
(\frac{x\sqrt{1-q}}{2}).
\end{align}
Moreover for $J\geq 4$, it has the following estimation
\begin{align}\label{qh_a2.8}
\sup_{|x|<2/\sqrt{1-q}}|f^{(q)}(x)-f_J^{(q)}(x)|\leq \frac{|q|^{(J-1)(J-2)/2}}{\pi(1-q^2)^2}.
\end{align}

\end{Lemma}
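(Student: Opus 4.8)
The strategy is to derive the series expansion \eqref{qh_a2.6} from the closed product form \eqref{qha2.3} via the Jacobi triple product identity, and then to obtain the uniform bound \eqref{qh_a2.8} from it by the triangle inequality together with a geometric‑series estimate of the tail.

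\textbf{Step 1 (reduction to a trigonometric identity).} I would put $x=\frac{2}{\sqrt{1-q}}\cos\theta$ with $\theta\in(0,\pi)$. Then $(1-q)x^2=4\cos^2\theta$, so $\sqrt{4-(1-q)x^2}=2\sin\theta$, and by the definition of $T_k$ one has $\sqrt{4-(1-q)x^2}\,T_{2k-2}\!\big(\tfrac{x\sqrt{1-q}}{2}\big)=2\sin\theta\cdot\frac{\sin((2k-1)\theta)}{\sin\theta}=2\sin((2k-1)\theta)$. Feeding this into the right side of \eqref{qh_a2.6} and using $|1-q^ne^{2i\theta}|^2=(1-q^ne^{2i\theta})(1-q^ne^{-2i\theta})$ in \eqref{qha2.3}, the claim \eqref{qh_a2.6} becomes equivalent to the $q$-series identity
\begin{align*}
\sin\theta\prod_{n=1}^{\infty}(1-q^{n})(1-q^{n}e^{2i\theta})(1-q^{n}e^{-2i\theta})=\sum_{k=1}^{\infty}(-1)^{k-1}q^{\binom{k}{2}}\sin\big((2k-1)\theta\big).
\end{align*}

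\textbf{Step 2 (Jacobi triple product).} I would prove the identity of Step 1 by specializing the Jacobi triple product $\sum_{n\in\mathbb{Z}}(-1)^{n}q^{\binom{n}{2}}w^{n}=(q;q)_{\infty}(w;q)_{\infty}(q/w;q)_{\infty}$ at $w=qe^{2i\theta}$. On the product side $(w;q)_\infty=\prod_{n\ge1}(1-q^{n}e^{2i\theta})$, $(q/w;q)_\infty=(e^{-2i\theta};q)_\infty=(1-e^{-2i\theta})\prod_{n\ge1}(1-q^{n}e^{-2i\theta})$, and $1-e^{-2i\theta}=2i\,e^{-i\theta}\sin\theta$; on the series side $\binom{n}{2}+n=\binom{n+1}{2}$ turns the left member into $\sum_{n\in\mathbb{Z}}(-1)^{n}q^{\binom{n+1}{2}}e^{2in\theta}$. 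Equating, multiplying through by $e^{i\theta}$, and then pairing the summation index $n$ with $-1-n$ (which leaves the exponent $\binom{n+1}{2}$ unchanged, reverses the sign, and sends $e^{(2n+1)i\theta}\mapsto e^{-(2n+1)i\theta}$) folds the bilateral sum into $2i\sum_{n\ge0}(-1)^{n}q^{\binom{n+1}{2}}\sin((2n+1)\theta)$; cancelling $2i$ and reindexing $k=n+1$ gives exactly the identity of Step 1, hence \eqref{qh_a2.6}. All the rearrangements are justified by absolute convergence of every series for $|q|<1$.

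\textbf{Step 3 (uniform error bound).} From \eqref{qh_a2.6}--\eqref{qh_a2.7} the remainder is $f^{(q)}(x)-f^{(q)}_{J}(x)=\frac{\sqrt{1-q}}{2\pi}\sum_{k\ge J}(-1)^{k-1}q^{\binom{k}{2}}\,\sqrt{4-(1-q)x^{2}}\,T_{2k-2}\!\big(\tfrac{x\sqrt{1-q}}{2}\big)$, and by Step 1 each such factor equals $2\sin((2k-1)\theta)$, so it is bounded by $2$ uniformly in $x$. Thus $\sup_{|x|<2/\sqrt{1-q}}|f^{(q)}(x)-f^{(q)}_{J}(x)|\le\frac{\sqrt{1-q}}{\pi}\sum_{k\ge J}|q|^{\binom{k}{2}}$. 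For $k\ge J\ge4$ one checks $\binom{k}{2}-\binom{J-1}{2}=\sum_{j=J-1}^{k-1}j\ge 3+2(k-J)$, whence $\sum_{k\ge J}|q|^{\binom{k}{2}}\le|q|^{\binom{J-1}{2}}\,|q|^{3}\sum_{m\ge0}|q|^{2m}=\frac{|q|^{3}}{1-q^{2}}\,|q|^{(J-1)(J-2)/2}$. Combining, $\sup|f^{(q)}-f^{(q)}_{J}|\le\frac{\sqrt{1-q}\,|q|^{3}}{\pi(1-q^{2})}\,|q|^{(J-1)(J-2)/2}$, and since $\sqrt{1-q}\,|q|^{3}(1-q^{2})\le1$ for every $q\in(-1,1)$ this is at most $\frac{|q|^{(J-1)(J-2)/2}}{\pi(1-q^{2})^{2}}$, which is \eqref{qh_a2.8}.

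\textbf{Expected difficulty.} The substantive point is the algebraic identity of Steps 1--2: one must pick the correct specialization in the Jacobi triple product and carry out the $n\leftrightarrow-1-n$ folding carefully so that the two-sided theta series collapses onto the odd-indexed sine series with precisely the powers $q^{\binom{k}{2}}$. Once \eqref{qh_a2.6} is established, Step 3 is routine — merely bookkeeping of the exponents $\binom{k}{2}$ and a geometric sum, with the hypothesis $J\ge4$ leaving ample slack to absorb $\sqrt{1-q}$ into the factor $(1-q^{2})^{-2}$.
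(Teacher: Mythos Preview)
The paper does not supply a proof of this lemma at all: the statement is quoted verbatim from Szab{\l}owski \cite{szablowski}, with the introductory line ``The density function $f^{(q)}(x)$ has the following expansion and the truncated error estimation \cite{szablowski}'' standing in for any argument. So there is no ``paper's own proof'' to compare against; your proposal is a genuine, self-contained derivation where the paper merely cites.

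Your argument is correct. Step~1 reduces \eqref{qh_a2.6} to the right trigonometric identity, and the Jacobi triple product specialization $w=qe^{2i\theta}$ in Step~2 is exactly the classical route to this formula: the shift $\binom{n}{2}+n=\binom{n+1}{2}$ and the folding $n\leftrightarrow -1-n$ are handled cleanly, and absolute convergence for $|q|<1$ justifies every rearrangement. In Step~3 the telescoping bound $\binom{k}{2}-\binom{J-1}{2}=\sum_{j=J-1}^{k-1}j\ge 3+2(k-J)$ for $J\ge4$ is correct, and the final inequality $\sqrt{1-q}\,|q|^{3}(1-q^{2})=(1-q)^{3/2}(1+q)|q|^{3}\le 1$ on $(-1,1)$ holds with a comfortable margin (the left side never exceeds about $0.25$), so the absorption into $(1-q^{2})^{-2}$ is legitimate. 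One cosmetic remark: in your ``Expected difficulty'' paragraph you might note that the bound you actually obtain, $\frac{\sqrt{1-q}\,|q|^{3}}{\pi(1-q^{2})}|q|^{(J-1)(J-2)/2}$, is in fact sharper than the stated \eqref{qh_a2.8}, which is worth recording.
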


Let $\mathcal{I}_q:=(\tilde{x}-\frac{2}{\sqrt{1-q}}, \tilde{x}+\frac{2}{\sqrt{1-q}})$ and $\mathcal{L}^{2}_{\mu_q}:=\mathcal{L}^{2}_{\mu_q}(\mathcal{I}_q)$ be the Hilbert space of functions that are square integrable with respect to the measure 
\begin{align}\label{pc4.1}
\mu_q(dx):=\frac{1}{\sqrt{\Xi}}f^{(q)}(\frac{x-\tilde{x}}{\sqrt{\Xi}})dx.
\end{align}
To simplify, we set $\tilde{x}=0, \Xi=1$. 
The inner product $(\cdot, \cdot)_{\mathcal{L}^{2}_{\mu_q}}$ and norm $\|\cdot\|_{\mathcal{L}^{2}_{\mu_q}}$ are defined by
\begin{align}\label{pc4.2}
&(\psi_1, \psi_2)_{\mathcal{L}^{2}_{\mu_q}}=\int_{\mathcal{I}_q}\psi_1(x)\psi_2(x)\mu_q(dx), \,\, \forall \psi_1, \psi_2\in \mathcal{L}^{2}_{\mu_q},\\
&\label{pc4.3} \|\psi\|_{\mathcal{L}^{2}_{\mu_q}}=\sqrt{\int_{\mathcal{I}_q}|\psi(x)|^2\mu_q(dx)},\,\, \forall \psi\in\mathcal{L}^{2}_{\mu_q}.
\end{align}

 
Q-Hermite polynomials \cite{koekoek} are determined by the following  recurrence relation
\begin{align}\label{qh2.2}
xH_n^{(q)}(x)=H_{n+1}^{(q)}(x)+[n]_qH_{n-1}^{(q)}(x),\,\, n\geq 1
\end{align}
with $H_0^{(q)}(x)=1$ and $H_1^{(q)}(x)=x$. 
 They are orthogonal to each other with respect to  measure \eqref{pc4.1}.
We can write the orthogonal relation in the following
\begin{align}\label{qh2.4}
\int_{-\frac{2}{\sqrt{1-q}}}^{\frac{2}{\sqrt{1-q}}}H_n^{(q)}(x) H_m^{(q)}(x)\mu_q(dx)=\delta_{mn}[n]_{q}!,
\end{align}
where $[n]_{q}!:=[1]_q\cdots[n]_q$ and $\delta_{mn}$ is the Kronecker delta.

Define the q-differential  $D_q, 0<|q|<1$
 \cite{koekoek}:
\begin{align}
D_qf(x)=\frac{\delta f(x)}{\delta x}, \,\, x=\cos\theta,
\end{align}
where 
\begin{align}
&\delta f(e^{i\theta})=f(q^{\frac{1}{2}}e^{i\theta})-f(q^{-\frac{1}{2}}e^{i\theta}),\\
&\delta x=-\frac{1}{2}q^{-\frac{1}{2}}(1-q)(e^{i\theta}-e^{-i\theta}).
\end{align}
It follows for q-Hermite polynomials $H_n^{(q)}(x)$ that \cite{koekoek}
\begin{align}
D_qH_n^{(q)}(x)=q^{-\frac{n-1}{2}}[n]_q H_{n-1}^{(q)}(x).
\end{align}
and generally
\begin{align}\label{derivative2.12}
D_q^{(k)}H_n^{(q)}(x)&=\prod_{l=1}^{k}q^{-\frac{n-l}{2}}[n-l+1]_qH_{n-k}^{(q)}(x), \,\,k=1, 2, \cdots.
\end{align}
For $f\in \mathcal{L}^2_{\mu_q}$, it has the following expansion
\begin{align}\label{217f}
f(x)=\sum_{n=0}^\infty a_n H_n^{(q)}(x).
\end{align}
Denote the first $N+1$ terms sum of \eqref{217f} by
\begin{align}
f_N(x)=\sum_{n=0}^N a_n H_n^{(q)}(x).
\end{align}
Following a similar proof in \cite{augustin}, we have the following truncated error estimation.
\begin{prop}\label{theorem2.2}
Let $0<|q|<1$ and $k\geq 1$. For  $f\in \mathcal{L}^2_{\mu_q}$ being $k$ times continuously q-differentiable, the convergence rate 
\begin{align}\label{q_ex}
\|f-f_N\|^2_{\mathcal{L}^2_{\mu_q}}\leq\frac{|q|^{\frac{(2N-1-k)k}{2}}}{\prod\limits_{l=1}^{k}[N-l+2]_q}\|D_q^{(k)}f\|^2_{\mathcal{L}^2_{\mu_q}}
\end{align}
can be obtained. Especially, for $f(x) = H^{(q)}_{N+1}(x)$, when $k=1$, we have 
\begin{align}\label{eeeee}
\|f-\sum_{n=0}^{N}a_n H_n^{(q)}\|^2_{\mathcal{L}^2_{\mu_q}}=\frac{|q|^N}{[N+1]_q}\|D_q^{(1)}f\|^2_{\mathcal{L}^2_{\mu_q}}.
\end{align}
\end{prop}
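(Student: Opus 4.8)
The plan is to use the q-Hermite expansion \eqref{217f} together with the orthogonality relation \eqref{qh2.4} and the formula \eqref{derivative2.12} for iterated q-derivatives. First I would write the tail of the expansion as $f - f_N = \sum_{n=N+1}^\infty a_n H_n^{(q)}$ and compute its norm via Parseval's identity: $\|f - f_N\|^2_{\mathcal{L}^2_{\mu_q}} = \sum_{n=N+1}^\infty a_n^2 [n]_q!$. The goal is to bound each $a_n^2$ in terms of the q-Fourier coefficients of $D_q^{(k)} f$. Applying $D_q^{(k)}$ term-by-term to \eqref{217f} and using \eqref{derivative2.12}, one sees that the $(n-k)$-th q-Hermite coefficient of $D_q^{(k)} f$ is $a_n \prod_{l=1}^k q^{-(n-l)/2}[n-l+1]_q$; call this $b_{n-k}$. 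Hence $a_n = b_{n-k} \prod_{l=1}^k q^{(n-l)/2}/[n-l+1]_q$, so that
\begin{align*}
a_n^2 [n]_q! = b_{n-k}^2 \, [n]_q! \prod_{l=1}^k \frac{q^{\,n-l}}{[n-l+1]_q^2}.
\end{align*}

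Next I would simplify the ratio of q-factorials. Since $[n]_q! / \prod_{l=1}^k [n-l+1]_q = [n-k]_q!$ (the product $\prod_{l=1}^k [n-l+1]_q = [n]_q [n-1]_q \cdots [n-k+1]_q$ cancels the top $k$ factors of $[n]_q!$), we get
\begin{align*}
a_n^2 [n]_q! = b_{n-k}^2 \, [n-k]_q! \prod_{l=1}^k \frac{q^{\,n-l}}{[n-l+1]_q}.
\end{align*}
The exponent of $q$ in $\prod_{l=1}^k q^{n-l}$ is $\sum_{l=1}^k (n-l) = kn - k(k+1)/2 = k(2n-k-1)/2$. Now for $n \geq N+1$, the quantity $|q|^{k(2n-k-1)/2}$ is decreasing in $n$ (for $0<|q|<1$), so it is maximized at $n = N+1$, giving $|q|^{k(2N+1-k)/2}$; wait — comparing with the stated exponent $|q|^{(2N-1-k)k/2}$, I should double-check the index bookkeeping: the first omitted term is $n = N+1$, and $(2(N+1) - k - 1)k/2 = (2N+1-k)k/2$. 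To match the paper's exponent $(2N-1-k)k/2 = (2(N-1) - k +1)\cdots$ one must track whether the sum starts at $n=N$ or $n=N+1$ and how $[N-l+2]_q$ arises; I would reconcile this by noting $\prod_{l=1}^k [n-l+1]_q \geq \prod_{l=1}^k [N-l+2]_q$ for $n \geq N+1$ since each factor is increasing in $n$, which produces exactly the denominator $\prod_{l=1}^k [N-l+2]_q$ in \eqref{q_ex}. Pulling the $n=N+1$ bound on $|q|^{k(2n-k-1)/2}$ and the lower bound on the denominator out of the sum, and using $\sum_{n=N+1}^\infty b_{n-k}^2 [n-k]_q! \leq \sum_{m=0}^\infty b_m^2 [m]_q! = \|D_q^{(k)} f\|^2_{\mathcal{L}^2_{\mu_q}}$, yields \eqref{q_ex}.

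For the sharp case $f = H_{N+1}^{(q)}$ with $k=1$: here $f_N = 0$ since $H_{N+1}^{(q)} \perp H_n^{(q)}$ for $n \leq N$, so $\|f - f_N\|^2 = \|H_{N+1}^{(q)}\|^2 = [N+1]_q!$. On the other side, $D_q^{(1)} H_{N+1}^{(q)} = q^{-N/2}[N+1]_q H_N^{(q)}$, so $\|D_q^{(1)} f\|^2 = q^{-N}[N+1]_q^2 [N]_q!$, and thus $\frac{|q|^N}{[N+1]_q}\|D_q^{(1)} f\|^2 = |q|^N q^{-N} [N+1]_q [N]_q! = [N+1]_q!$ (taking $q>0$ so $|q|^N q^{-N}=1$), which matches \eqref{eeeee}; the general-$q$ sign is absorbed because $[N+1]_q!$ and the right side carry the same sign, or one restricts to $q>0$ as the displayed identity implicitly does.

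The main obstacle I anticipate is the exponent/index bookkeeping in the $q$-power and in the ratio of q-factorials: getting the monotonicity direction right for $|q|^{k(2n-k-1)/2}$ and for the factors $[n-l+1]_q$ as functions of $n$, and making sure the "worst" term in the tail sum is correctly identified as $n=N+1$ (not $n=N$), so that the stated exponent $(2N-1-k)k/2$ and denominator $\prod_{l=1}^k [N-l+2]_q$ come out precisely. Everything else is a routine application of Parseval on $\mathcal{L}^2_{\mu_q}$ and the differentiation rule \eqref{derivative2.12}; the convergence/termwise-differentiation of the series is justified by the assumed $k$-fold continuous q-differentiability of $f$.
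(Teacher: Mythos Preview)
Your approach is correct and essentially identical to the paper's: both proofs compute the tail norm via Parseval, express the coefficients of $D_q^{(k)}f$ using \eqref{derivative2.12}, and bound the ratio termwise by evaluating the $q$-power and the product $\prod_{l=1}^k[n-l+1]_q$ at the first tail index $n=N+1$. Your worry about the exponent is justified: the natural bound from $n=N+1$ is $|q|^{(2N+1-k)k/2}$, which is \emph{smaller} than the paper's stated $|q|^{(2N-1-k)k/2}$ for $0<|q|<1$; the paper simply records a slightly looser (but still valid) upper bound, so there is nothing to reconcile beyond noting that your inequality implies the proposition's.
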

\begin{proof}
 We assume $0<q<1$. For $-1<q<0$, the proof is exactly same. 

By the orthogonality of q-Hermite polynomials, for any $f\in \mathcal{L}^2_{\mu_q}$, it holds the Parseval identify
\begin{align}\nonumber
\|f\|^2_{\mathcal{L}^2_{\mu_q}}&=(\sum_{n=0}^\infty a_n H_n^{(q)}(\cdot), \sum_{k=0}^\infty a_k H_k^{(q)}(\cdot))_{\mathcal{L}^2_{\mu_q}}\\
&=\sum_{n=0}^\infty [n]_q!a_n^2.\label{parf}
\end{align}
Using the formulation \eqref{derivative2.12}, with some simple calculations
we get \begin{align}
&\|D_q^{(k)}f\|^2_{\mathcal{L}^2_{\mu_q}}\nonumber\\
&=(\sum_{i=k}^\infty a_i\prod_{l=1}^k q^{-\frac{i-l}{2}}[i-l+1]_qH_{i-k}^{(q)}, \sum_{j=k}^\infty a_j\prod_{l=1}^k q^{-\frac{j-l}{2}}[j-l+1]_qH_{j-k}^{(q)})_{\mathcal{L}^2_{\mu_q}}\nonumber\\
&=\sum_{i, j=k}^\infty a_ia_j\prod_{l=1}^k q^{-\frac{i-l}{2}}[i-l+1]_q(\prod_{l=1}^k q^{-\frac{j-l}{2}}[j-l+1]_q)(H_{i-k}^{(q)}, H_{j-k}^{(q)})_{\mathcal{L}^2_{\mu_q}}\nonumber\\
&=\sum_{j=k}^\infty a_j^2[j-k]_q!(\prod_{l=1}^k q^{-\frac{j-l}{2}}[j-l+1]_q)^2\nonumber\\
&=\sum_{j=k}^\infty a_j^2[j-k]_q! q^{-\frac{(2j-1-k)k}{2}}(\prod_{l=1}^k [j-l+1]_q)^2.\label{parsevald}
\end{align}
Using the above \eqref{parf} and \eqref{parsevald} we have
\begin{align*}
&\|f-f_N\|^2_{\mathcal{L}^2_{\mu_q}}=\sum_{n=N+1}^\infty a_n^2 [n]_q!
\nonumber\\
&=\sum_{n=N+1}^\infty a_n^2 [n-k]_q! \prod_{l=1}^{k}[n-l+1]_q\nonumber\\
&=\sum_{n=N+1}^\infty a_n^2 [n-k]_q! q^{-\frac{(2n-1-k)k}{2}}\prod_{l=1}^{k}[n-l+1]_q  q^{\frac{(2n-1-k)k}{2}}\\
&\leq \sum_{n=N+1}^\infty a_n^2 [n-k]_q! q^{-\frac{(2n-1-k)k}{2}}\prod_{l=1}^{k}[n-l+1]_q \frac{\prod\limits_{l=1}^{k}[n-l+1]_q}{\prod\limits_{l=1}^{k}[N-l+2]_q} q^{\frac{(2n-1-k)k}{2}}\nonumber\\
 &\leq \frac{q^{\frac{(2N-1-k)k}{2}}}{\prod\limits_{l=1}^{k}[N-l+2]_q}\|D_q^{(k)}f\|^2_{\mathcal{L}^2_{\mu_q}}.\nonumber
\end{align*}
This completes the proof of \eqref{q_ex}. \\
Next, for $f(x) = H^{(q)}_{N+1}(x)$, 
the q-Hermite expansion coefficients of $f(x)$ hold
\begin{align}
a_{n}=\left\{
\begin{aligned}
&1,& n= N+1, \\
&0, & \text{otherwise}.
\end{aligned}
\right.
\end{align}
The Parseval equality \eqref{parf} gives
\begin{align}
\|f-f_{N}\|^2_{\mathcal{L}^2_{\mu_q}}=[N+1]_q!.
\end{align}
The right can be rewritten as
\begin{align}
[N+1]_q!=\frac{q^{-N}[N]_q![N+1]_q^2}{[N+1]_q}q^N.
\end{align}
This together with equality \eqref{parsevald} yields \eqref{eeeee}.
\end{proof}

\begin{remark}
For $q=0$, $H_n^{(0)}(x)$ are the Chebyshev polynomials.  When $f$ has $k$  continuous derivatives, then $|f(x)-f_N(x)|=O(N^{-(k-1)}).$ One can refer \cite{gil} for more details.
\end{remark}
\begin{remark}
 The estimation \eqref{q_ex} is consistent with that in \cite{augustin}, i.e.,
\begin{align}
\|f-f_N\|^2_{\mathcal{L}^2_{\mu_1}}\leq \frac{1}{\prod\limits_{l=1}^{k}(N-l+2)}\|f^{(k)}\|^2_{\mathcal{L}^2_{\mu_1}},
\end{align}
where $\mathcal{L}^2_{\mu_1}$ is the square integrable space with Gaussian weight.
 It is obvious that when $|q|\,\, (0<|q|\leq 1)$ is smaller, the convergence of $f_N$ to $f$ in \eqref{q_ex} is faster and  the result is superior to the classical one.

\end{remark}

%

\section{Bayesian inversion based on q-Gaussian prior}

Inverse problems are to find $x$, an input to a mathematical model, from given observation $y$. We have an equation of the form
\begin{align}\label{qpr3.1}
y=\varphi(x^\dag)+\eta,
\end{align}
where $\eta$ is the data noise, $x^\dag$ is the true solution and $\varphi:\mathcal{X}\rightarrow \mathcal{Y}$ is the forward operator, $\mathcal{X}, \mathcal{Y}$ are Banach spaces. To simplify the discussion, we assume $\mathcal{X}, \mathcal{Y}$ to be finite dimensional spaces $\mathbb{R}^m, \mathbb{R}^n$.
The most common used method to solve \eqref{qpr3.1} is regularization techniques, e.g., Tikhonov regularization that searches the minimizer of the following minimization problem
\begin{align}
\label{qpr3.2}
\text{arg}\min\limits_{x\in\mathcal{X}} \|\varphi(x)-y\|_{\mathcal{Y}}^2+\|x-x_0\|_{\mathcal{X}}^2,
\end{align}
where $x_0$ is the prior guess.

In Bayesian inversion, inverse problem \eqref{qpr3.1} is restated within probability framework. In details, we consider $\mathcal{X}, \mathcal{Y}$ as  sample spaces. Let $X$ be a random variable in $\mathcal{X}$ and $x$ be a realization of $X$ (In subsequent, we denote random variables by capital letters and its realization by the corresponding lower letters.).   The forward operator $\varphi$ maps probability space $(\mathcal{X}, \mathfrak{F}_{\mathcal{X}}, \mu_{\mathcal{X}})$ to probability space $(\mathcal{Y}, \mathfrak{F}_{\mathcal{Y}}, \mu_{\mathcal{Y}})$. Here $\mathfrak{F}$ and $\mu$ with the subscripts $\mathcal{X}, \mathcal{Y}$ denote the Borel $\sigma$-algebras and the probability measures in the corresponding space respectively (We omit the subscripts under no any confusion). 
   Instead of finding an estimation $x$ from an observation $y$, we explore the probability distribution $\mu^y(dx):=\mu(dx|Y=y)$ of random variable $X$ given by $Y$. According to Bayes' formula \eqref{in1.1}, it is transformed to determine likelihood function and prior distribution. We take the distribution of noise $\eta$ as the Gaussian, i.e.,
\begin{align}\label{qpr3.3}
\eta\sim N(0, \Gamma),
\end{align}
where $\Gamma$ is the noise covariance matrix. In this case, the likelihood function can be written as
\begin{align}\label{qpr3.4}
f(y| x)&=\frac{1}{(2\pi)^\frac{n}{2}\sqrt{\text{det}(\Gamma)}}\exp(-\frac{\|\Gamma^{-\frac{1}{2}}(\varphi(x)-y)\|^2}{2})\nonumber\\
&:=\frac{1}{(2\pi)^\frac{n}{2}\sqrt{\text{det}(\Gamma)}}\exp(-\frac{\|\varphi(x)-y\|_{\Gamma}^2}{2}),
\end{align} 
where $\text{det}$ denotes the determinant. We assume that the components of the uncertain parameter vector $X=(X_1, X_2, \cdots, X_m)$ are independent random variables $X_i$. For each component, we set its prior density as
the q-Gaussian defined by \eqref{pc4.1} with mean $\tilde{x}_i$ and variance $\Xi_i$.
For multi-dimensional case, with a slight abuse of notation, we still denote the measure by $\mu_q(dx)$ for multi-dimensional case.
 Therefore, the posterior density satisfies
\begin{align}\label{qpr3.5}
f^y(x)\propto f(y| x)\prod\limits_{i=1}^m \frac{1}{\sqrt{\Xi_i}}f^{(q)}(\frac{x_i-\tilde{x}_i}{\sqrt{\Xi_i}}).
\end{align}
Maximizing the posterior probability is equivalent to minimizing the following function
\begin{align}\label{qpr3.6}
\frac{1}{2}\|\varphi(x)-y\|^2_\Gamma-\sum_{i=1}^m\log(f^{(q)}(\frac{x_i-\tilde{x}_i}{\sqrt{\Xi_i}})):=\Psi(x, y)+H(x),
\end{align}
where $\Psi$ is the negative log likelihood, also called the potential function.
By the i.i.d. of $X$, we get the Hessian matrix of $H(x)$ 
\begin{align*}
\text{Hess}(x)&=(\frac{\partial^2 H}{\partial x_i\partial x_j})_{m\times m}
\\&=\text{diag}(\frac{\partial^2 H}{\partial x_1^2}, \frac{\partial^2 H}{\partial x_2^2}, \cdots, \frac{\partial^2 H}{\partial x_m^2}),
\end{align*}
where
\begin{align*}
\frac{\partial^2 H}{\partial x_i^2}=\frac{(\frac{df^{(q)}(x_i)}{dx_i})^2-\frac{d^2f^{(q)}(x_i)}{dx_i^2}f^{(q)}(x_i)}{(f^{(q)}(x_i))^2},\,\, i=1, 2, \cdots, m.
\end{align*}

\begin{Lemma}\cite{szablowski}
$f^{(q)}(x)$ is bimodal for $q\in (-1, q_0)$, where $q_0\approx-0.107$ is the largest real root of the equation $\sum_{k=0}^{\infty}(2k+1)^2q^{\frac{k(k+1)}{2}}=0$.
\end{Lemma}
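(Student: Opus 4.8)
\emph{Proof strategy.} Since $f^{(q)}$ is an even, continuous probability density on $\mathcal{I}_q=[-2/\sqrt{1-q},\,2/\sqrt{1-q}]$ that vanishes at the endpoints and is strictly positive in the interior (evident from the product form \eqref{qha2.3}), ``bimodal'' is equivalent to the two statements that $x=0$ is a strict local minimum of $f^{(q)}$ and that $f^{(q)}$ has exactly one local maximum on $(0,2/\sqrt{1-q})$. The plan is to reduce both to the sign of the theta-type series
\begin{align*}
S(q):=\sum_{k=0}^{\infty}(2k+1)^{2}q^{k(k+1)/2}
\end{align*}
appearing in the statement.

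The first step is the trigonometric change of variable $x=\tfrac{2}{\sqrt{1-q}}\cos\theta$, $\theta\in(0,\pi)$, which is a diffeomorphism onto the interior of $\mathcal{I}_q$. Inserting $T_{2k-2}(\cos\theta)=\sin((2k-1)\theta)/\sin\theta$ and $\sqrt{4-(1-q)x^{2}}=2\sin\theta$ into the expansion \eqref{qh_a2.6} of Lemma \ref{lemma2.1} makes the $\sin\theta$ factors cancel, so that
\begin{align*}
f^{(q)}\!\Big(\tfrac{2\cos\theta}{\sqrt{1-q}}\Big)=\frac{\sqrt{1-q}}{\pi}\,g(\theta),\qquad g(\theta):=\sum_{k=1}^{\infty}(-1)^{k-1}q^{\binom{k}{2}}\sin\big((2k-1)\theta\big),
\end{align*}
the series converging uniformly because $q^{\binom{k}{2}}$ decays super-exponentially. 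The centre $x=0$ corresponds to $\theta=\pi/2$; from $\cos((2k-1)\pi/2)=0$ one gets $g'(\pi/2)=0$ (reflecting the evenness of $f^{(q)}$), $g(\pi/2)=\pi f^{(q)}(0)/\sqrt{1-q}>0$, and, using $\sin((2k-1)\pi/2)=(-1)^{k-1}$ and reindexing $k\mapsto k+1$,
\begin{align*}
g''(\pi/2)=-\sum_{k=1}^{\infty}(2k-1)^{2}q^{\binom{k}{2}}=-\,S(q).
\end{align*}
Since $\theta\mapsto x$ is a local diffeomorphism with $g'(\pi/2)=0$, the chain rule yields $\operatorname{sign}\!\big((f^{(q)})''(0)\big)=\operatorname{sign}\!\big(g''(\pi/2)\big)=-\operatorname{sign}\!\big(S(q)\big)$, so $x=0$ is a strict local minimum of $f^{(q)}$ exactly when $S(q)<0$.

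It then remains to analyze $S$ and to count critical points. I would treat $S$ as a real-analytic function on $(-1,1)$, observe $S(q)\ge1$ on $[0,1)$ since every term is nonnegative, and prove that $S$ has a unique zero $q_{0}$ in $(-1,1)$, located numerically near $-0.107$, with $S<0$ on $(-1,q_{0})$ and $S>0$ on $(q_{0},1)$; granting this, $x=0$ is a strict local minimum for all $q\in(-1,q_{0})$. For the mode count, $g$ is symmetric about $\theta=\pi/2$, so the number of local maxima of $f^{(q)}$ on $(0,2/\sqrt{1-q})$ equals the number of sign changes of $g'$ on $(0,\pi/2)$; at $q=0$ one has $g'(\theta)=\cos\theta>0$ there, and as $q$ decreases past $q_{0}$ exactly one interior critical point splits off from $\theta=\pi/2$. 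Showing that $g'$ has exactly one zero on $(0,\pi/2)$ for every $q\in(-1,q_{0})$ then gives exactly one maximum on $(0,2/\sqrt{1-q})$, and together with its reflection this is the claimed bimodality. The main obstacle is precisely this uniform sign control, for $q$ bounded away from $0$, of the near-theta series $S(q)$ and of $g'(\theta)$: because the exponents $k(k+1)/2$ run through the pattern even, odd, odd, even, $\dots$, the terms cancel in groups and one must pair consecutive terms and bound the tails carefully (or invoke a variation-diminishing, Descartes-type property of such $q$-series) to exclude further sign changes. The local behaviour at the threshold $q_{0}$ itself is, by contrast, a routine perturbative analysis of a degenerate critical point.
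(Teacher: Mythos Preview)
The paper does not prove this lemma at all: it is quoted verbatim from \cite{szablowski} and used only as a black box in the discussion following \eqref{qpr3.6}. So there is no in-paper argument to compare your proposal against.

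That said, your reduction is the right one and almost certainly coincides with Szab{\l}owski's. The substitution $x=\tfrac{2}{\sqrt{1-q}}\cos\theta$ together with the Chebyshev expansion \eqref{qh_a2.6} does give $f^{(q)}(x)=\tfrac{\sqrt{1-q}}{\pi}\,g(\theta)$ with $g(\theta)=\sum_{k\ge1}(-1)^{k-1}q^{\binom{k}{2}}\sin((2k-1)\theta)$, and your computation $g''(\pi/2)=-S(q)$ after the shift $k\mapsto k+1$ is correct; this is exactly why the threshold equation in the statement is $S(q)=0$. So the equivalence ``$x=0$ is a strict local minimum of $f^{(q)}$'' $\Leftrightarrow$ $S(q)<0$ is established.

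Where your write-up remains a sketch rather than a proof is in the two places you yourself flag. First, you assert but do not prove that $S$ has a \emph{unique} zero on $(-1,0)$; the sign pattern of the exponents $k(k+1)/2$ that you mention is a real obstruction to a one-line argument, and a clean proof needs either a monotonicity statement for $S$ on $(-1,0)$ or a careful pairing/tail estimate. Second, the passage from ``$x=0$ is a local minimum'' to ``$f^{(q)}$ has exactly two modes'' requires showing that $g'$ has exactly one zero on $(0,\pi/2)$ for every $q\in(-1,q_0)$; your bifurcation heuristic (one critical point splits off as $q$ crosses $q_0$) is plausible locally but does not by itself rule out additional critical points appearing for $q$ far from $q_0$. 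Both points are genuine gaps that must be closed for the argument to be complete; since the present paper only cites the result, you would need to consult \cite{szablowski} to see how they are handled there.
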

The fact that $f^{(q)}(x)$ for $q\geq q_0$ is unimodal implies that $\frac{\partial^2 H}{\partial x_i^2}\geq 0$ and therefore $H$ is positive semidefinite, which means that $H$ is a convex penalty term of function \eqref{qpr3.6}. Whereas, when $-1<q<q_0$, there exists three extreme points $-a, 0, a$. It is obvious that $\frac{df^{(q)}}{dx}(0)=\frac{df^{(q)}}{dx}(\pm a) =0$ and $\frac{d^2f^{(q)}}{dx^2}(0)>0$, $\frac{d^2f^{(q)}}{dx^2}(\pm a)<0$. Moreover, because $f^{(q)}(x)$ is probability density function, it satisfies that $f^{(q)}(x)\geq 0$. Thereby, we have $\frac{\partial^2 H}{\partial x^2}\mid_{x=0}<0$ and $\frac{\partial^2 H}{\partial x^2}\mid_{x=\pm a}>0$, which implies $H$ is a non-convex penalty function. For non-convex constraint, the function \eqref{qpr3.6} has multi local minima. This case is typically hard to solve and analyze in classical optimization framework.

\section{Polynomial chaos expansion of likelihood function based on q-Hermite polynomials}

\subsection{Spectral likelihood approximation}

We know that q-Hermite polynomials $H_n^{(q)}$ are orthogonal with respect to measure $\mu_q(dx)$. Define multivariate polynomials
\begin{align}\label{pc4.4}
\mathcal{H}_\alpha^{(q)}(x):=H_{\alpha_1}^{(q)}(x_1)H_{\alpha_2}^{(q)}(x_2)\cdots H_{\alpha_m}^{(q)}(x_m),
\end{align}
where $\alpha=(\alpha_1, \alpha_2, \cdots, \alpha_m)\in\mathbb{N}_0^m$. It is obvious that polynomials $\mathcal{H}_\alpha^{(q)}$ are orthogonal with respect to measure $\mu_q(dx)$ in \eqref{pc4.1} 
\begin{align}\label{pc4.5}
(\mathcal{H}_\alpha^{(q)}, \mathcal{H}_\beta^{(q)})_{\mathcal{L}^{2}_{\mu_q}}=\left\{
\begin{aligned}
&[\alpha_1]_q![\alpha_2]_q!\cdots[\alpha_m]_q!, && \alpha=\beta \\
&0, &&  \alpha\neq\beta.
\end{aligned}
\right.
\end{align}
Polynomials $\{\mathcal{H}_\alpha^{(q)}\}$ form a complete orthogonal system of $\mathcal{L}^{2}_{\mu_q}$. The likelihood function $f(y| x)$ is measurable in the prior measure ${\mu_q(dx)}$. It can be expanded in
\begin{align}\label{pc4.6}
f(y| x)=\sum_{\alpha\in \mathbb{N}_0^m}a_\alpha^y\mathcal{H}_\alpha^{(q)}(x),
\end{align}
where $a_\alpha^y$ is the Fourier coefficients depending on data $y$ defined by
\begin{align}\label{pc4.7}
a_\alpha^y=\frac{(f(y|\cdot), \mathcal{H}_\alpha^{(q)}(\cdot))_{\mathcal{L}^{2}_{\mu_q}}}{[n_1]_q![n_2]_q!\cdots[n_m]_q!}.
\end{align}
In real numerical implementation, the expansion in \eqref{pc4.6} will be truncated to a finite summation
\begin{align}\label{pc4.8}
f_{\Lambda_N}(y| x)=\sum_{\alpha\in \Lambda_N}a_\alpha^y\mathcal{H}_\alpha^{(q)}(x),
\end{align}
where $\Lambda_N$ is finite multi-indices set defined by
\begin{align}\label{pc4.9}
\Lambda_N:=\{\alpha\in\mathbb{N}_0^m: \|\alpha\|_1=\sum_{i=1}^{m}|\alpha_i|\leq N\}.
\end{align}
This truncation means that we need collect all multivariate polynomials with order $\|\alpha\|_1$ smaller than or equal to $N$. The total number \cite{nagel,yan1,yan2} is 
\begin{align}\label{pc4.10}
P=\left(\begin{array}{c}m+N  \\N \end{array}\right)=\frac{(m+N)!}{m!N!}.
\end{align}
A simple way to reduce the dimension is to limit the number of regressors relies on hyperbolic truncation sets. 
For $0<l<1$ a quasinorm is defined as 
$\|\alpha\|_l=(\sum_{i=1}^m |\alpha_i|^l)^{1/l}$.
The corresponding hyperbolic truncation
scheme is then given as $\varpi=\{\alpha\in\mathbb{N}^m\mid \|\alpha\|_l\leq N\}.$
The convergence in the mean-square sense  is indicated 
in \cite{box,nagel} 
\begin{align}\label{pc4.11}
\|f(y|\cdot)-f_{\Lambda_N}(y|\cdot)\|_{\mathcal{L}^{2}_{\mu_q}}^2=\mathbb{E}^{\mu_q}((f(y| x)-f_{\Lambda_N}(y| x))^2)=\sum\limits_{\alpha\in \mathbb{N}^m\backslash\Lambda_N} |a_\alpha^y|^2.
\end{align}
If $f(y| x)$ is $k$ times continuous q-differentiable about $x$,  Theorem \ref{theorem2.2} shows the mean square error $\|f(y|\cdot)-f_{\Lambda_N}(y|\cdot)\|_{\mathcal{L}^{2}_{\mu_q}}\rightarrow 0$.
One can refer to \cite{augustin,muhlpfordt} for truncation errors for polynomial chaos expansions.

\subsection{Christoffel least square}
For obtaining the expansion coefficients $a_\alpha^y$, we adopt the stochastic collocation algorithm \cite{marzouk2,yan1}. The collocation equation will be solved in the least-square  procedure.  For given data $y$, we need find the minimizer
\begin{align}\label{leas4.12}
a_\alpha^y:=\arg\min\frac{1}{J}\sum_{j=1}^J |f(y| x^{(j)})-f_{\Lambda_N}(y| x^{(j)})|^2,
\end{align}
where $x^{(j)}=(x^{(j)}_1, x^{(j)}_2, \cdots, x^{(j)}_n)$ is a sample of $X\sim\mu_q(dx)$, i.e., $x^{(j)}_i\sim f^{(q)}(x)dx$ and $J$ is the sample number.
In general, the sample number $J$ is greater than the number $P$ in \eqref{pc4.10}, which leads to an overdetermined linear system
\begin{align}\label{chris4.13}
A^*A a^y_\alpha=A^*b,
\end{align}
where $A=(A_{kj})=(\mathcal{H}_{\alpha^{(k)}}(x^{(j)}))$ is a $J\times P$ Vandermonde-like matrix, $a^y_\alpha=(a^y_{\alpha^{(1)}}, a^y_{\alpha^{(2)}}, \cdots, a^y_{\alpha^{(P)}})^T$ and
$b=(f(y| x^{(1)}), f(y| x^{(2)}), \cdots, f(y| x^{(J)}))^T$.
 The popular techniques for solving \eqref{leas4.12} include interpolatory approaches, compressive sampling or $l^1$ regularization and the least-squares $l^2$ regularization. A new approach, called Christoffel least-squares, is proposed in \cite{narayan}. This method takes $x^{(j)}$ to be i.i.d. from another distribution $\nu(dx)$. Let $v(x)$ be the density function of $\nu(dx)$. The support of $v$ contains the support of $f^{(q)}(x)$. Instead of least-squares problem \eqref{leas4.12}, we solve
\begin{align}\label{chr4.14}
a_\alpha^y=\arg\min \frac{1}{J}\sum_{j=1}^J\kappa_j|f(y| x^{(j)})-f_{\Lambda_N}(y| x^{(j)})|^2,
\end{align}
where $\kappa_j=\frac{f^{(q)}(x^{(j)})}{v(x^{(j)})}$. Obviously, the solution is defined by
\begin{align}\label{kres4.15}
a_\alpha^y=\arg\min\limits_{a\in\mathbb{R}^m}\|\sqrt{\mathcal{K}}\tilde{A}a-\sqrt{\mathcal{K}}\tilde{b}\|^2,
\end{align}
where $\mathcal{K}$ is a $J\times J$ diagonal matrix with entries $\mathcal{K}_{jj}=\kappa_j$ and $\tilde{A}, \tilde{b}$ are defined like in \eqref{chris4.13} by replacing the samples drawn from distribution $\nu$. 
The solution can be obtained by solving the normal equation of \eqref{kres4.15}
\begin{align}
(\tilde{A}^*\mathcal{K}\tilde{A})a_\alpha^y=\tilde{A}^*\mathcal{K}\tilde{b}.
\end{align}
The CLS algorithm choose the weights $\kappa_j$ to be quantities that scale each row of $\sqrt{\mathcal{K}}\tilde{A}$ to have $l^2$ norm equal to the constant $P$, i.e.,
\begin{align}
\kappa_j=\frac{P}{\sum_{\alpha\in\Lambda_N}\mathcal{H}_\alpha^2(x^{(j)})}.
\end{align}
The measure $\nu$ is called pluripotential equilibrium measure,  which has density function for our settings, i.e., the q-Gaussian density $f^{(q)}(x)$
\begin{align}
v(x)=\frac{\sqrt{1-q}}{\pi\sqrt{4-(1-q)x^2}}.
\end{align}
This is Chebyshev density corresponding to the arcsin measure.
The equilibrium measure for multi-dimensional case is the product of the univariate measure for i.i.d. case.

\section{Convergence analysis}
In this section, we analyze the error between the exact posterior measure
\begin{align}
\mu^y:=\frac{f(y| x)\mu_q(dx)}{\int f(y| x)\mu_q(dx)}:=\frac{f(y| x)\mu_q(dx)}{\gamma}
\end{align}
 and its approximation version 
\begin{align}
\tilde{\mu}^y_{NJ}:=\frac{f_{\Lambda_N}(y| x)\mu_{q}^J(dx)}{\int  f_{\Lambda_N}(y| x)\mu_{q}^J(dx)}:=\frac{f_{\Lambda_N}(y| x)\mu_{q}^J(dx)}{\gamma_{NJ}},
\end{align}
where $\mu_{q}^J(dx)$ is defined by replacing $f^{(q)}(x)$ in $\mu_q(dx)$ \eqref{pc4.1} with its truncated version $f^{(q)}_J(x)$. 
In addition, we introduce the measure
\begin{align}
\tilde{\mu}^y_{N}:=\frac{f_{\Lambda_N}(y| x)\mu_q(dx)}{\int f_{\Lambda_N}(y| x)\mu_q(dx)}:=\frac{f_{\Lambda_N}(y| x)\mu_q(dx)}{\gamma_{N}}.
\end{align}
We will focus on the Kullback-Leibler (KL) divergence \cite{lu1,lu2,marzouk1,sanz} (also known as the relative entropy) of $\nu$ with respect to $\mu$
 to measure the bound 
\begin{align}
D_{KL}(\nu||\mu):=\left\{
\begin{aligned}
&\int \nu\log\frac{\nu}{\mu}=\mathbb{E}^\nu\log(\frac{\nu}{\mu}), &  \nu\ll\mu \\
&\infty, &  \text{otherwise},
\end{aligned}
\right.
\end{align}
where $\nu\ll\mu$ means $\nu$ is absolutely continuous with respect to $\mu$.  If $\mu$ and $\nu$ are two measures on a $\sigma$-algebra $\mathfrak{F}_\mathcal{X}$ of subsets of $\mathcal{X}$, we say that $\nu$ is absolutely continuous with respect to $\mu$ if $\nu(A)=0$ for any $A\in \mathfrak{F}_\mathcal{X}$ such that $\mu(A)=0$. If the measure $\nu$ is finite, i.e. $\nu(X)<\infty$, the property $\nu\ll\mu$ is equivalent to the following stronger statement: for any $\epsilon>0$ there is a $\delta>0$ such that $\nu(A)<\epsilon$ for every $A$ with $\mu(A)<\delta$.
The Kullback-Leibler divergence is vital in measuring the loss of information when $\nu$ is instead of $\mu$ in information theory. As seen in \cite{sanz}, the Kullback-Leibler divergence is non-negative. But, since it may not meet the symmetric and the triangle inequality, it is not a metric on the space of probability measures. Nevertheless, we can use it to quantify the proximity of the measures $\nu$ and $\mu$ by virtue of some inequalities like
\begin{align}\label{errr5.5}
D_{\text{TV}}(\nu, \mu):=\sup\{|\nu(A)-\mu(A)|: A\in \mathcal{X}\}\leq D_{KL}(\nu||\mu)^{\frac{1}{2}}
\end{align}
and 
\begin{align}\label{errr5.6}
D_{\text{Hell}}^2(\nu, \mu):=\frac{1}{2}\int(\sqrt{\nu}-\sqrt{\mu})^2\leq\frac{1}{2}D_{KL}(\nu||\mu),
\end{align}
where $D_{\text{TV}}(\nu, \mu), D_{\text{Hell}}^2(\nu||\mu)$ are the so-called total variation metric and Hellinger metric respectivelly.

For convenience, we assume the noise covariance $\Gamma=\delta^2 I$. 
\begin{assumption}\cite{stuart,stuart1}\label{assumption1}
The forward operator $\varphi: \mathbb{R}^m\rightarrow\mathbb{R}^n$ satisfies the following: for every $\epsilon>0$ there exists $M=M(\epsilon)\in\mathbb{R}$ such that, for all $x$,
\begin{align*}
\|\varphi(x)\|_2\leq\exp(\epsilon\|x\|_2^2+M).
\end{align*}
\end{assumption}
\begin{Lemma}\cite{dashti}\label{lemma5.1}
Let $\varphi$ satisfy Assumption \ref{assumption1}.
Then $\Psi$ satisfies: 
For every $r>0$, there exists $L=L(r)>0$
such that for all $x$ and $y\in\mathbb{R}^m$
with $\max\{\|x\|_2, \|y\|_2\}<r$,
\begin{align*}
\Psi(x, y)\leq L(r).
\end{align*}
\end{Lemma}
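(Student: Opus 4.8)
\textbf{Proof plan for Lemma \ref{lemma5.1}.}
The statement is a boundedness estimate for the potential
$\Psi(x,y)=\tfrac12\|\varphi(x)-y\|_\Gamma^2$ on bounded subsets of
$\mathbb{R}^m\times\mathbb{R}^m$, given the exponential growth bound on $\varphi$
from Assumption \ref{assumption1}. The key observation is that for a bounded
set of $x$, Assumption \ref{assumption1} applied with \emph{any} fixed $\epsilon>0$
already yields a uniform bound on $\|\varphi(x)\|_2$, so the quadratic expression
in the definition of $\Psi$ is controlled by constants depending only on $r$.

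First I would fix $r>0$ and restrict to $x,y$ with
$\max\{\|x\|_2,\|y\|_2\}<r$. Choosing $\epsilon=1$ (any fixed value works) in
Assumption \ref{assumption1} gives an $M=M(1)$ with
$\|\varphi(x)\|_2\leq\exp(\|x\|_2^2+M)\leq\exp(r^2+M)=:C_1(r)$ for all such $x$.
Next I would use the triangle inequality together with
$\Gamma=\delta^2 I$, so that
$\|\varphi(x)-y\|_\Gamma^2=\delta^{-2}\|\varphi(x)-y\|_2^2
\leq\delta^{-2}\bigl(\|\varphi(x)\|_2+\|y\|_2\bigr)^2
\leq\delta^{-2}\bigl(C_1(r)+r\bigr)^2$.
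Finally, setting
$L(r):=\tfrac{1}{2\delta^2}\bigl(C_1(r)+r\bigr)^2
=\tfrac{1}{2\delta^2}\bigl(e^{r^2+M(1)}+r\bigr)^2$
gives $\Psi(x,y)\leq L(r)$ uniformly on the prescribed set, which is exactly the
claim.

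There is essentially no obstacle here: the proof is a direct chain of
elementary estimates, and it is the same argument as in \cite{dashti}. The only
point requiring a little care is to note that although Assumption
\ref{assumption1} is phrased with a quantifier "for every $\epsilon>0$," one does
\emph{not} need $\epsilon$ small — one just needs the bound for a single fixed
$\epsilon$, and then the boundedness of $\|x\|_2$ does the rest. (Smallness of
$\epsilon$ matters for the companion integrability statements about $\Psi$ against
the prior, not for this local boundedness lemma.) One should also record that
$L(r)$ depends on $\delta$ through the $\tfrac1{\delta^2}$ prefactor, which is
harmless since $\Gamma$ is a fixed data of the problem.
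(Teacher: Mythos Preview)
Your argument is correct and is exactly the standard elementary chain of estimates for this boundedness statement. Note, however, that the paper does not give its own proof of Lemma~\ref{lemma5.1}: the result is simply quoted from \cite{dashti}, so there is no in-paper proof to compare against. Your remark that only a single fixed $\epsilon$ is needed (not $\epsilon\to0$) is the right observation, and your bookkeeping of the $\delta$-dependence through the factor $\tfrac{1}{2\delta^{2}}$ is accurate given the standing convention $\Gamma=\delta^{2}I$; for general $\Gamma$ the same argument goes through with $\|\Gamma^{-1/2}\|^{2}$ in place of $\delta^{-2}$.
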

By the above assumption and Lemma \ref{lemma5.1}, we have the following convergence result.

\begin{thm}\label{theorem5.2} Let $\varphi$ satisfy assumption \ref{assumption1} and
 $\delta\leq \frac{\exp(-\frac{L(r)}{n})}{\sqrt{2\pi}}$. It holds that
\begin{align}
D_{KL}(\tilde{\mu}^y_{N}||\mu^y)\leq(2\pi)^{\frac{n}{2}}\exp(L(r))\mathbb{E}^{\mu_q}((f(y| x)-f_{\Lambda_N}(y| x))^2)\rightarrow 0
\end{align}
as $N\rightarrow \infty$.
\end{thm}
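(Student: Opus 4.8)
The plan is to express $D_{KL}(\tilde\mu^y_N||\mu^y)$ through the Radon--Nikodym derivative $d\tilde\mu^y_N/d\mu^y$, reduce it by the elementary inequality $\log t\le t-1$ to a weighted $\mathcal{L}^2$-error between $f(y|\cdot)$ and $f_{\Lambda_N}(y|\cdot)$, and then close the argument with \eqref{pc4.11} together with completeness of $\{\mathcal{H}_\alpha^{(q)}\}$ in $\mathcal{L}^2_{\mu_q}$.

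First I would fix the radius $r$. Since the q-Gaussian prior $\mu_q$ is supported in the box $[-2/\sqrt{1-q},2/\sqrt{1-q}]^m$, one can choose $r$ with $\mathrm{supp}\,\mu_q\subset\{x:\|x\|_2<r\}$ and $\|y\|_2<r$; Lemma \ref{lemma5.1} then gives $\Psi(x,y)\le L(r)$ for every $x\in\mathrm{supp}\,\mu_q$. Feeding this into the explicit Gaussian likelihood \eqref{qpr3.4} with $\Gamma=\delta^2 I$, i.e. $f(y|x)=(2\pi)^{-n/2}\delta^{-n}\exp(-\Psi(x,y))$, and invoking the hypothesis $\delta\le\exp(-L(r)/n)/\sqrt{2\pi}$, I would deduce $f(y|x)\ge1$ on $\mathrm{supp}\,\mu_q$, hence $1/f(y|x)\le1$ there and $\gamma=\int f(y|x)\,\mu_q(dx)\ge1$. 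Since moreover $f(y|\cdot)$ is continuous on the compact set $\mathrm{supp}\,\mu_q$ (continuity of $\varphi$) and $f_{\Lambda_N}(y|\cdot)\to f(y|\cdot)$ in $\mathcal{L}^2_{\mu_q}$ by \eqref{pc4.11}, one gets $\gamma_N\to\gamma$ by Cauchy--Schwarz, and — granting enough q-regularity of $f(y|\cdot)$ to upgrade this convergence to uniform convergence on $\mathrm{supp}\,\mu_q$ — for $N$ large $f_{\Lambda_N}(y|\cdot)\ge\tfrac12>0$ there, so that $\tilde\mu^y_N$ is a genuine probability measure, $\tilde\mu^y_N\ll\mu^y$, and $\gamma_N$ is bounded away from $0$.

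Next I would carry out the main estimate. Writing $g:=d\tilde\mu^y_N/d\mu^y=(\gamma/\gamma_N)\,f_{\Lambda_N}(y|\cdot)/f(y|\cdot)$, one has
\[
D_{KL}(\tilde\mu^y_N||\mu^y)=\int g\log g\,d\mu^y
=\log\frac{\gamma}{\gamma_N}+\frac{1}{\gamma_N}\int f_{\Lambda_N}(y|x)\log\frac{f_{\Lambda_N}(y|x)}{f(y|x)}\,\mu_q(dx).
\]
Applying $\log t\le t-1$ to the last logarithm and simplifying with $f_{\Lambda_N}=f-(f-f_{\Lambda_N})$, $\int f\,\mu_q(dx)=\gamma$, $\int f_{\Lambda_N}\,\mu_q(dx)=\gamma_N$, the right-hand side is dominated by $\log\frac{\gamma}{\gamma_N}+1-\frac{\gamma}{\gamma_N}+\frac{1}{\gamma_N}\int\frac{(f-f_{\Lambda_N})^2}{f}\,\mu_q(dx)$; since $\log s+1-s\le0$ for $s>0$, this yields
\[
D_{KL}(\tilde\mu^y_N||\mu^y)\le\frac{1}{\gamma_N}\int\frac{(f(y|x)-f_{\Lambda_N}(y|x))^2}{f(y|x)}\,\mu_q(dx)\le\frac{1}{\gamma_N}\,\mathbb{E}^{\mu_q}\big((f(y|x)-f_{\Lambda_N}(y|x))^2\big),
\]
using $1/f\le1$. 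Inserting the bounds on the normalizing constants from the previous step (for $N$ large, $1/\gamma_N$ is dominated by the explicit constant $(2\pi)^{n/2}\exp(L(r))\ge1$) gives the asserted inequality. Finally, by \eqref{pc4.11} the right-hand side equals $\sum_{\alpha\notin\Lambda_N}|a_\alpha^y|^2$, the tail of the convergent series $\|f(y|\cdot)\|^2_{\mathcal{L}^2_{\mu_q}}=\sum_\alpha|a_\alpha^y|^2<\infty$ (legitimate since $f(y|\cdot)$ is bounded on the compact $\mathrm{supp}\,\mu_q$, hence in $\mathcal{L}^2_{\mu_q}$, and the $\mathcal{H}_\alpha^{(q)}$ are complete), so it vanishes as $N\to\infty$ — with a rate supplied by Proposition \ref{theorem2.2} when $f(y|\cdot)$ is $k$ times continuously q-differentiable.

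The step I expect to be the main obstacle is the control of the normalizing constant $\gamma_N$: because $f_{\Lambda_N}(y|\cdot)$ is only a truncated orthogonal expansion it need not be nonnegative, so to make $\tilde\mu^y_N$ a probability measure and to keep $1/\gamma_N$ under control one must show that $f_{\Lambda_N}(y|\cdot)$ is bounded below by a positive constant on $\mathrm{supp}\,\mu_q$ for $N$ large — that is, one must upgrade the $\mathcal{L}^2_{\mu_q}$-rate of Proposition \ref{theorem2.2} to uniform convergence on the compact support and combine it with the lower bound $f(y|\cdot)\ge1$ furnished by the hypothesis on $\delta$. The remaining ingredients are the elementary inequality $\log t\le t-1$ and the identity \eqref{pc4.11}.
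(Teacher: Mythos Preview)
Your argument is correct and is in several respects cleaner than the paper's. The paper does not compute $D_{KL}(\tilde\mu^y_N\|\mu^y)$ directly; instead it bounds it by the \emph{symmetrized} divergence $D_{KL}(\tilde\mu^y_N\|\mu^y)+D_{KL}(\mu^y\|\tilde\mu^y_N)=\int(\tilde\mu^y_N-\mu^y)\log(\tilde\mu^y_N/\mu^y)$, splits this into two pieces $I_1+I_2$, argues that $I_1\le0$, and controls $I_2$ via the mean-value-type bound $|\log(a/b)|\le|a-b|$ valid for $a,b\ge1$. This route has the advantage that the prefactor ends up being $1/\gamma$ rather than $1/\gamma_N$, so the lower bound on the normalizing constant is immediate from $f(y|\cdot)\ge1$ with no appeal to convergence of $\gamma_N$. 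On the other hand, the paper's route needs both $f\ge1$ \emph{and} $f_{\Lambda_N}\ge1$ pointwise to justify the logarithm bound, and its verification of the latter (and of the sign of $I_1$) is handled rather informally. Your approach, going straight through the Radon--Nikodym derivative and the single inequality $\log t\le t-1$ together with the cancellation $\log(\gamma/\gamma_N)+1-\gamma/\gamma_N\le0$, avoids the symmetrization altogether and is algebraically tighter; the price is that the prefactor becomes $1/\gamma_N$, which is exactly the obstacle you flag. Since both proofs ultimately rely on $f_{\Lambda_N}(y|\cdot)$ being bounded below on $\mathrm{supp}\,\mu_q$ for large $N$ --- the paper to make the log-inequality work, you to bound $\gamma_N$ from below and to make $\tilde\mu^y_N$ a probability measure --- your identification of this as the crux is accurate, and your proposed fix (uniform convergence on the compact support combined with $f(y|\cdot)\ge1$) is the natural one.
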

\begin{proof}

According to the non-negative of the relative entropy, we get
\begin{align*}
&D_{KL}(\tilde{\mu}^y_{N}||\mu^y)\leq D_{KL}(\tilde{\mu}^y_{N}||\mu^y)+D_{KL}(\mu^y||\tilde{\mu}^y_{N})
\\
&=\int\tilde{\mu}^y_{N}\log\frac{\tilde{\mu}^y_{N}}{\mu^y}
+\int\mu^y\log\frac{\mu^y}{\tilde{\mu}^y_{N}}\\
&=\int(\tilde{\mu}^y_{N}-\mu^y)\log\frac{\tilde{\mu}^y_{N}}{\mu^y}\\
&=\int(\frac{f_{\Lambda_N}(y| x)\mu_q(dx)}{\gamma_{N}}-\frac{f(y| x)\mu_q(dx)}{\gamma}
)\log\frac{f_{\Lambda_N}(y| x)}{f(y| x)}\frac{\gamma}{\gamma_N}\\
&=\int\mu_q(dx)(\frac{f_{\Lambda_N}(y| x)}{\gamma_{N}}-\frac{f(y| x)}{\gamma}
)\log\frac{f_{\Lambda_N}(y| x)}{f(y| x)}\\
&=\frac{1}{\gamma\gamma_N}\int\mu_q(dx)(\gamma f_{\Lambda_N}(y| x)-\gamma_N f(y| x)
)\log\frac{f_{\Lambda_N}(y| x)}{f(y| x)}\\
&=\frac{1}{\gamma\gamma_N}\int\mu_q(dx)(\gamma f_{\Lambda_N}(y| x)-\gamma_N f_{\Lambda_N}(y| x)\\
&+\gamma_N f_{\Lambda_N}(y| x)-\gamma_N f(y| x)
)\log\frac{f_{\Lambda_N}(y| x)}{f(y| x)}\\
&=\frac{\gamma-\gamma_N}{\gamma\gamma_N}\int\mu_q(dx)f_{\Lambda_N}(y| x)\log\frac{f_{\Lambda_N}(y| x)}{f(y| x)}\\
&+\frac{1}{\gamma}\int\mu_q(dx)(f_{\Lambda_N}(y| x)-f(y| x))\log\frac{f_{\Lambda_N}(y| x)}{f(y| x)}\\
&:=I_1+I_2.
\end{align*}
Since $(\gamma-\gamma_N)\log\frac{f_{\Lambda_N}(y| x)}{f(y| x)}\leq 0$, 
we know $I_1\leq 0$, which yields
\begin{align}
D_{KL}(\tilde{\mu}^y_{N}||\mu^y)\leq I_2.
\end{align}
Here
\begin{align}
I_2=\frac{1}{\gamma}\int\mu_q(dx)(f_{\Lambda_N}(y| x)-f(y| x))\log\frac{f_{\Lambda_N}(y| x)}{f(y| x)}.
\end{align}
When $\delta\leq \frac{\exp(-\frac{L(r)}{n})}{\sqrt{2\pi}}$, we get by Lemma \ref{lemma5.1}
\begin{align}
&f(y| x)=\frac{1}{(2\pi)^{\frac{n}{2}}\delta^n}\exp(-\frac{\|\varphi(x)-y\|_2^2}{2\delta^2})
\\
&\geq \frac{1}{(2\pi)^{\frac{n}{2}}\delta^n}\exp(-L(r))\geq 1
\end{align}
for $x\in B(0, r)$.
When $f(y| x)\geq 1$, we have $f_{\Lambda_N}(y| x)\geq1$ almost everywhere. In fact, according to the convergence 
\begin{align}
\mathbb{E}^{\mu_q}(f_{\Lambda_N}(y| x)-f(y| x))^2\rightarrow 0, \,\, \text{as} \,\, N\rightarrow \infty,
\end{align}
we have for arbitrary $\epsilon>0$, there exists $\tilde{N}>0$, when $N>\tilde{N}$
\begin{align}
\int(f_{\Lambda_N}(y| x)-f(y| x))^2\mu_q(dx)<\epsilon.
\end{align}
Define $E_1:=\{x\mid f_{\Lambda_N}(y| x)<1\}$ and $E_2=E_1^c$. 
If $$\mathfrak{m}_{\mu_q(dx)}(E_1):=\int_{E_1}\mu_q(dx)>0,$$
then we have
\begin{align}\nonumber
&\epsilon>\int(f_{\Lambda_N}(y| x)-f(y| x))^2\mu_q(dx)
\\
&=\int_{E_1}+\int_{E_2}(f_{\Lambda_N}(y| x)-f(y| x))^2\mu_q(dx)\\
&\geq \int_{E_1}(f_{\Lambda_N}(y| x)-f(y| x))^2\mu_q(dx)>0.\nonumber
\end{align}
The arbitrary of $\epsilon$ leads to a contradiction. Therefore, we can assume $f_{\Lambda_N}(y| x)\geq 1$. Likewise, since likelihood function $f(y| x)$ is non-negative for any $x$, we also suppose $f_{\Lambda_N}(y| x)\geq 0$. 
 By this, it follows that
\begin{align}
\log\frac{f_{\Lambda_N}(y| x)}{f(y| x)}\leq f_{\Lambda_N}(y| x)-f(y| x),\,\, \text{for} \,\, x\in B(0, r).
\end{align}
For $I_2$, we have 
\begin{align*}
&I_2=\frac{1}{\gamma}\int\mu_q(dx)(f_{\Lambda_N}(y| x)-f(y| x))\log\frac{f_{\Lambda_N}(y| x)}{f(y| x)}\\
&\leq \frac{1}{\gamma}(\int_{\|x\|_2\leq r}+\int_{\mathbb{R}^n\backslash B(0, r)}) \mu_q(dx)(f_{\Lambda_N}(y| x)-f(y| x))\log\frac{f_{\Lambda_N}(y| x)}{f(y| x)}\\
&\leq \frac{1}{\gamma}\int_{\|x\|_2\leq r}\mu_q(dx)(f_{\Lambda_N}(y| x)-f(y| x))^2\\
&+\frac{1}{\gamma}\int_{\mathbb{R}^n\backslash B(0, r)}\mu_q(dx)(f_{\Lambda_N}(y| x)-f(y| x))\log\frac{f_{\Lambda_N}(y| x)}{f(y| x)}.
\end{align*}
So long as $r$ is sufficient large, we can guarantee $\mu_q(dx)=0$ for $x\in\mathbb{R}^n\backslash B(0, r)$. In fact, $r$ is greater than $\frac{2}{\sqrt{1-q}}$ that assure $B(0, r)\supset\mathcal{I}_q$ can meet this point. 
The boundedness of $\gamma$ from below  can be obtained by the similar reasons. 
\begin{align}
\gamma&=\int \mu_q(dx) f(y| x)\nonumber\\
&=\int_{\|x\|_2\leq r}\mu_q(dx) f(y| x)+\int_{\mathbb{R}^n\backslash B(0, r)}\mu_q(dx) f(y| x)\\
&\geq \frac{1}{(2\pi)^{\frac{n}{2}}\delta^n}\exp(-L(r)).\nonumber
\end{align}
Therefore, we have
\begin{align}
I_2\leq (2\pi)^{\frac{n}{2}}\exp(L(r))\mathbb{E}^{\mu_q}((f(y| x)-f_{\Lambda_N}(y| x))^2),
\end{align}
which yields
\begin{align}
D_{KL}(\tilde{\mu}^y_{N}||\mu^y)\leq(2\pi)^{\frac{n}{2}}\exp(L(r))\mathbb{E}^{\mu_q}((f(y| x)-f_{\Lambda_N}(y| x))^2).
\end{align}

\end{proof}


Moreover, using Lemma \ref{lemma2.1}, we get 
\begin{thm}\label{theorem5.3}
When $J\rightarrow\infty$, we have
\begin{align}
D_{KL}(\tilde{\mu}^y_{NJ}||\tilde{\mu}^y_{N})\rightarrow 0.
\end{align}
\end{thm}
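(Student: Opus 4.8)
\emph{Proof strategy.} The plan is to reduce the claim, with $N$ held fixed, to the fact that the Radon--Nikodym derivative of $\tilde\mu^y_{NJ}$ with respect to $\tilde\mu^y_N$ converges to $1$ uniformly as $J\to\infty$. After the normalisation $\tilde x=0,\Xi=1$ one has $\mu_q(dx)=f^{(q)}(x)\,dx$ and $\mu_q^J(dx)=f^{(q)}_J(x)\,dx$, so the two posteriors have Lebesgue densities proportional to $f_{\Lambda_N}(y| x)f^{(q)}(x)$ and $f_{\Lambda_N}(y| x)f^{(q)}_J(x)$, whence
\begin{align*}
\frac{d\tilde\mu^y_{NJ}}{d\tilde\mu^y_N}(x)=\frac{\gamma_N}{\gamma_{NJ}}\,\frac{f^{(q)}_J(x)}{f^{(q)}(x)}=:\Phi_J(x).
\end{align*}
The elementary pointwise bound $\log\Phi_J(x)\le\log\bigl(\sup_{x}\Phi_J(x)\bigr)$ then gives $D_{KL}(\tilde\mu^y_{NJ}||\tilde\mu^y_N)=\mathbb{E}^{\tilde\mu^y_{NJ}}[\log\Phi_J]\le\log\bigl(\sup_{x}\Phi_J(x)\bigr)$. (Equivalently, one may mirror the symmetrisation used in the proof of Theorem~\ref{theorem5.2} and estimate $\int(\tilde\mu^y_{NJ}-\tilde\mu^y_N)\log(\tilde\mu^y_{NJ}/\tilde\mu^y_N)$; this leads to the same quantities.)

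The crucial step is to prove $f^{(q)}_J/f^{(q)}\to 1$ uniformly on $\mathcal I_q$. By the series representations \eqref{qh_a2.6}--\eqref{qh_a2.7} the common prefactor $\tfrac{\sqrt{1-q}}{2\pi}\sqrt{4-(1-q)x^2}$ cancels, so that $f^{(q)}_J(x)/f^{(q)}(x)=g_J(x)/g(x)$ on the interior of $\mathcal I_q$, where $g(x)=\sum_{k\ge1}(-1)^{k-1}q^{\binom{k}{2}}T_{2k-2}(\tfrac{x\sqrt{1-q}}{2})$ and $g_J$ is its partial sum over $1\le k\le J-1$. From $|T_{2k-2}(\cos\theta)|=|\sin((2k-1)\theta)/\sin\theta|\le 2k-1$ one obtains the uniform tail bound $|g(x)-g_J(x)|\le\sum_{k\ge J}|q|^{\binom{k}{2}}(2k-1)=:\tilde\varepsilon_J\to 0$, while comparing \eqref{qh_a2.6} with the product formula \eqref{qha2.3} identifies $g(x)=\prod_{n\ge1}(1-q^n)\,|1-q^ne^{2i\theta}|^2$, which is bounded below uniformly by $\underline g:=\prod_{n\ge1}(1-q^n)(1-|q|^n)^2>0$ since $|1-q^ne^{2i\theta}|^2=1-2q^n\cos2\theta+q^{2n}\ge(1-|q|^n)^2$. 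Hence $|f^{(q)}_J(x)/f^{(q)}(x)-1|\le\tilde\varepsilon_J/\underline g=:\rho_J\to 0$; in particular $f^{(q)}_J\ge 0$ on $\mathcal I_q$ for $J$ large, so $\mu_q^J$ is a genuine measure and $\tilde\mu^y_{NJ}\ll\tilde\mu^y_N$. The multivariate case only multiplies such ratios over the coordinates and is handled in the same way.

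For the normalising constants, $|\gamma_{NJ}-\gamma_N|\le\bigl(\sup_{\mathcal I_q}|f^{(q)}_J-f^{(q)}|\bigr)\,\|f_{\Lambda_N}(y|\cdot)\|_{L^1(\mathcal I_q)}\to 0$ by Lemma~\ref{lemma2.1}, the $L^1$ factor being finite because $f_{\Lambda_N}(y|\cdot)$ is a polynomial on the bounded set $\mathcal I_q$; as $\gamma_N>0$ is fixed, $\gamma_N/\gamma_{NJ}\to 1$. Combining, $\sup_x\Phi_J(x)\le(\gamma_N/\gamma_{NJ})(1+\rho_J)\to 1$, and therefore
\begin{align*}
0\le D_{KL}(\tilde\mu^y_{NJ}||\tilde\mu^y_N)\le\log\bigl(\sup_x\Phi_J(x)\bigr)\longrightarrow 0\qquad(J\to\infty).
\end{align*}

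The main obstacle is the uniform two-sided control of the density ratio near the endpoints of $\mathcal I_q$, where both $f^{(q)}$ and $f^{(q)}_J$ vanish and the sup-estimate \eqref{qh_a2.8} is by itself useless; the resolution is precisely that the vanishing factor $\sqrt{4-(1-q)x^2}$ is common to numerator and denominator and cancels, after which the ratio is governed by the series $g$, uniformly positive by \eqref{qha2.3}. A secondary point, inherited from the proof of Theorem~\ref{theorem5.2}, is the non-negativity of the truncated likelihood $f_{\Lambda_N}(y|\cdot)$, which is what makes $\tilde\mu^y_{NJ}$ and $\tilde\mu^y_N$ genuine probability measures with positive normalising constants.
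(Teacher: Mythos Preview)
Your argument is correct and shares the paper's key insight: the ratio $f^{(q)}_J/f^{(q)}$ is controlled by cancelling the common factor $\sqrt{4-(1-q)x^2}$, reducing to the ratio $g_J/g$ of the trigonometric series, which converges to $1$ uniformly on $\mathcal I_q$. The paper proceeds from there via the symmetrised quantity $D_{KL}(\tilde\mu^y_{NJ}\|\tilde\mu^y_N)+D_{KL}(\tilde\mu^y_N\|\tilde\mu^y_{NJ})$, rewrites it as $\gamma_N^{-1}\int f_{\Lambda_N}f^{(q)}\,u\log(1+u)\,dx$ with $u=(f^{(q)}_J-f^{(q)})/f^{(q)}$, and concludes from $u\log(1+u)=O(u^2)\to 0$; you instead bound $D_{KL}$ directly by $\log\bigl(\sup_x\Phi_J\bigr)$. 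Your route is a little cleaner on two counts: it keeps explicit track of the normalising constants (the paper silently replaces $\gamma_{NJ}$ by $\gamma_N$ in the symmetrised expression), and it supplies an explicit uniform lower bound $\underline g=\prod_{n\ge 1}(1-q^n)(1-|q|^n)^2$ via the product formula~\eqref{qha2.3}, whereas the paper only asserts that the remaining factor of $f^{(q)}$ has no zeros. Both arguments ultimately rest on the same cancellation observation, so the approaches are essentially the same.
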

\begin{proof}
We only analyze the case of $n=1$. For $n>1$, the independence of each components $x_i$ of $x$ yields  the same discussion.
As the discussion in Theorem \ref{theorem5.2}, it follows that
\begin{align}
&D_{KL}(\tilde{\mu}^y_{NJ}||\tilde{\mu}^y_{N})\leq D_{KL}(\tilde{\mu}^y_{NJ}||\tilde{\mu}^y_{N})+D_{KL}(\tilde{\mu}^y_{N}||\tilde{\mu}^y_{NJ})\nonumber\\
&\leq \frac{1}{\gamma_N}\int f_{\Lambda_N}(\mu_J-\mu)\log\frac{\mu_J}{\mu}=\frac{1}{\gamma_N}\int f_{\Lambda_N}(\mu_J-\mu)\log\frac{f^{(q)}_J(x)}{f^{(q)}(x)}\\
&=\frac{1}{\gamma_N}\int f_{\Lambda_N}f^{(q)}(x)dx\frac{f^{(q)}_J(x)-f^{(q)}(x)}{f^{(q)}(x)}\log\frac{f^{(q)}_J(x)}{f^{(q)}(x)}.\nonumber
\end{align}
Denoting $\frac{f^{(q)}_J(x)-f^{(q)}(x)}{f^{(q)}(x)}:=u$, we write
\begin{align}
D(f_J, f):=\frac{f^{(q)}_J(x)-f^{(q)}(x)}{f^{(q)}(x)}\log\frac{f^{(q)}_J(x)}{f^{(q)}(x)}=u\log(1+u).
\end{align}
By the expressions of $f^{(q)}_J(x)$ and $f^{(q)}(x)$, the common factor
$\sqrt{4-(1-q)x^2}$ contains two null points, which are  all the zero points of $f^{(q)}_J(x)$ and $f^{(q)}(x)$. And the remainder factors of them have no zero points, which yields according to Lemma \ref{lemma2.1}
\begin{align}
u=\frac{
\sum\limits_{k=J}^{\infty}
(-1)^{k-1}q^{\left(\begin{array}{c}k \\2\end{array}\right)}T_{2k-2}
(\frac{x\sqrt{1-q}}{2})}{
\sum\limits_{k=1}^{\infty}
(-1)^{k-1}q^{\left(\begin{array}{c}k \\2\end{array}\right)}T_{2k-2}
(\frac{x\sqrt{1-q}}{2})}\rightarrow 0 \,\, \text{uniformly as}\,\, J\rightarrow \infty.
\end{align}
Thus $D(f_J, f)=O(u^2)\rightarrow 0$ as $J\rightarrow\infty$.
This gives our conclusion.
\end{proof}
 By using the inequalities \eqref{errr5.5}, \eqref{errr5.6}, it follows from  Theorem \ref{theorem5.2}, \ref{theorem5.3}  that
\begin{corollary}
The same conditions as in Theorem \ref{theorem5.2} hold. We have for $N\rightarrow\infty, J\rightarrow\infty$
\begin{align}
D_{\text{Hell}}(\tilde{\mu}^y_{NJ}, \mu^y)\rightarrow 0, \\
D_{\text{TV}}(\tilde{\mu}^y_{NJ}, \mu^y)\rightarrow 0.
\end{align}
\end{corollary}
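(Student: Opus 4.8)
The plan is to combine Theorems~\ref{theorem5.2} and~\ref{theorem5.3} via the triangle inequality, exploiting the fact that, unlike $D_{KL}$, both $D_{\text{Hell}}$ and $D_{\text{TV}}$ are genuine metrics on the space of probability measures, together with the comparison estimates \eqref{errr5.5} and \eqref{errr5.6}. The intermediate measure $\tilde{\mu}^y_N$ serves as the bridge: Theorem~\ref{theorem5.2} controls the distance from $\tilde{\mu}^y_N$ to $\mu^y$ in terms of the likelihood truncation error, and Theorem~\ref{theorem5.3} controls the distance from $\tilde{\mu}^y_{NJ}$ to $\tilde{\mu}^y_N$ in terms of the prior truncation error.

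Concretely, for the Hellinger metric I would write
\begin{align*}
D_{\text{Hell}}(\tilde{\mu}^y_{NJ},\mu^y)\le D_{\text{Hell}}(\tilde{\mu}^y_{NJ},\tilde{\mu}^y_{N})+D_{\text{Hell}}(\tilde{\mu}^y_{N},\mu^y),
\end{align*}
and then bound each summand by \eqref{errr5.6}: $D_{\text{Hell}}(\tilde{\mu}^y_{NJ},\tilde{\mu}^y_{N})^2\le\frac12 D_{KL}(\tilde{\mu}^y_{NJ}\|\tilde{\mu}^y_{N})$ and $D_{\text{Hell}}(\tilde{\mu}^y_{N},\mu^y)^2\le\frac12 D_{KL}(\tilde{\mu}^y_{N}\|\mu^y)$. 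Given $\epsilon>0$, Theorem~\ref{theorem5.2} lets me first fix $N$ large enough that the second term drops below $\epsilon/2$; with that $N$ now fixed, Theorem~\ref{theorem5.3} lets me take $J$ large enough that the first term drops below $\epsilon/2$. Hence $D_{\text{Hell}}(\tilde{\mu}^y_{NJ},\mu^y)\to 0$. For the total variation metric the argument is verbatim the same, using that $D_{\text{TV}}$ also obeys the triangle inequality and that \eqref{errr5.5} bounds each summand $D_{\text{TV}}(\cdot,\cdot)$ by the square root of the corresponding KL divergence, which again vanishes under the same two-step choice of $N$ and then $J$; alternatively one may simply invoke $D_{\text{TV}}\le\sqrt{2}\,D_{\text{Hell}}$ and quote the Hellinger conclusion.

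The only delicate point — the ``hard part,'' such as it is — is the order of the two limits. The bound in Theorem~\ref{theorem5.2} is $(2\pi)^{n/2}\exp(L(r))\,\mathbb{E}^{\mu_q}((f(y|x)-f_{\Lambda_N}(y|x))^2)$, which does not involve $J$, so it is legitimate to send $N\to\infty$ first; by contrast, the rate in Theorem~\ref{theorem5.3} genuinely depends on $N$ through $\gamma_N$ and $f_{\Lambda_N}$, so one must not interchange the limits without justification, and the $\epsilon/2$ argument above is the clean way to handle this. Everything else is a direct substitution into \eqref{errr5.5}--\eqref{errr5.6}.
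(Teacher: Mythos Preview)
Your argument is correct and is exactly the route the paper has in mind: the paper does not spell out a proof but simply says the corollary ``follows from Theorem~\ref{theorem5.2}, \ref{theorem5.3}'' via the comparison inequalities \eqref{errr5.5}--\eqref{errr5.6}, and your triangle-inequality decomposition through the intermediate measure $\tilde{\mu}^y_N$ is precisely how one makes that sentence rigorous. Your care with the order of the limits $N\to\infty$, $J\to\infty$ is a detail the paper suppresses entirely, and is a welcome addition (in fact, inspecting the proof of Theorem~\ref{theorem5.3} one sees that pulling $\sup_x D(f_J,f)$ out of the integral leaves $\gamma_N^{-1}\int f_{\Lambda_N}\,\mu_q(dx)=1$, so the bound there is actually uniform in $N$ and the limits can be taken in either order, but your $\epsilon/2$ argument is the safe and correct way to present it).
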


\section{Numerical test}
In the numerical examples, we implement the Metropolis-Hastings algorithm  \cite{stuart} to sample from the posterior distribution. This algorithm aims at sampling from a target distribution $\mu(dx)$ with density $\pi(x)$. Algorithm MH provides the details of this algorithm.

\begin{tabular}{l}
\hline
{\bf Algorithm MH:} Metropolis-Hastings algorithm\\
\hline
Initialize $x^{(0)}\in\mathcal{X}$ \\
for $i=1, 2, \cdots$ do\\
\hspace{0.5cm} Propose: Move $x^{(i-1)}$ to a candidate $\hat{x}$ \\
\hspace{2cm} according to a transition density $q(x|x^{(i-1)})$.
\\
\hspace{0.5cm} Acceptance probability:
\\
\hspace{1cm}$\alpha(\hat{x}|x^{(i-1)})=\min\{1, \frac{q(x^{(i-1)}|\hat{x})\pi(\hat{x})}{q(\hat{x}|x^{(i-1)})\pi(x^{(i-1)})}\}$.
\\
\hspace{0.9cm} $u\sim$ Uniform$(0, 1)$.
\\
\hspace{0.5cm} Accept the proposal $\hat{x}$ with probability $\alpha$, i.e.,
\\
\hspace{1cm} $
x^{(i)}=\left\{
\begin{aligned}
&\hat{x}, & \text{if}\,\,  \alpha>u,\\
&x^{(i-1)}, & \text{otherwise}.
\end{aligned}
\right.
$\\
end for
\\
\hline
\end{tabular}

\subsection{1D problem}
We consider the estimation problem of the unknown mean $x$ according to random realizations $\{y_i\}_{i=1}^n$, which are drawn independently from a Gaussian distribution $N(y|x, \sigma^2)$ with a given standard deviation $\sigma$. It can be seen that if the prior is taken as Gaussian distribution, the posterior is also Gaussian.
Here we suppose that $x$ obeys a q-Gaussian prior distribution. The posterior distribution exhibits non-Gaussianity.  We use this simple example as the first testbed.
The likelihood function can be written as
\begin{align*}
f(y| x)&=\prod_{i=1}^n \frac{1}{(2\pi \sigma^2)^{\frac{1}{2}}}\exp(-\frac{(y_i-x)^2}{2\sigma^2})\\
&=
 \frac{1}{(2\pi \sigma^2)^{\frac{n}{2}}}\exp(-\frac{\sum_{i=1}^n(y_i-x)^2}{2\sigma^2}).
\end{align*}
Thereby, we get the posterior according to Bayes' formula
\begin{align*}
f^y(x)=\frac{f(y| x)f^{(q)}(x)}{\int f(y| x)f^{(q)}(x)dx}.
\end{align*}
We generate the data $y_i$ from Gaussian distribution $N(y|x^\dag, \sigma^2)$ with the true mean $x^\dag=10$ and the standard deviation $\sigma=5$. For the numerical experiment, the pseudo-random numbers $$y=[15.0389;-0.6183;7.4771;3.6470;8.0871;13.2434;14.1286;4.9253;7.6447;10.6851]$$ are used as synthetic data. In the Christoffel algorithm, the density function for the pluripotential equilibrium measure $\nu$ is taken as
\begin{align}
v(x)=\frac{1}{\sqrt{\Xi}}\frac{\sqrt{1-q}}{\pi\sqrt{4-(1-q)\frac{(x-x_0)^2}{\Xi}}}.
\end{align}
The parameter $x_0$ and $\Xi$ are specified as $x_0=11.5$ and $\Xi=c(x_0-x^\dag)^2(1-q)/4$ with a fixed positive constant $c>1$. In the numerical tests, the prior density is truncated to the first $100$ terms, i.e., $J=100$. The normalization constants $\gamma_{NJ}$ and $\gamma_J$ (has a similar definition with $\gamma$) are computed by Monte Carlo method.
We list the relative error of the posterior density function in Table \ref{tab:tab1}. The results show that the spectral likelihood expansion (SLE) approximation can fit the likelihood function well and the numerical precision is higher with higher order polynomial. For comparison purposes, we draw some samples from the posterior distribution by means of MCMC. An independence sampling process is utilized. The likelihood function and its SLE approximation, the posterior density and a normalized histogram of the obtained random walk samples are shown in Fig. \ref{ex1_1}. From the results, it can be seen that the SLEs are able to approximate the likelihood function well. As depicted in \cite{nagel}, these well-fitted regions accumulate the largest proportions of the total prior probability mass. Due to the reason that the q-Gaussian priors have compact, even though the SLEs start strongly deviating from the likelihood function, the posterior densities by using SLEs vanish when far away from the peaks. From the viewpoint, the proposed priors are better options for fitting the posterior densities by using SLEs. For the sampling effect, the histogram plots can reflect the posterior densities well.

\begin{table}[h]
\centering
\caption{The relative errors for posterior density with different $q$ and orders $N$.}\label{tab:tab1}
\begin{tabular}{|c|c|c|c|c|c|}
\hline
$q$ & $N=2$ & $N=5$&$N=7$&$N=9$& $N=12$ \\
\hline
-0.5 & 0.0772&0.0065&0.0044&1.8092e-04&6.8778e-05\\
\hline
-0.2 & 0.0963&0.0077&0.0048&1.9242e-04&9.2320e-05\\
\hline
0 & 0.1083&0.0088&0.0061&1.9865e-04&5.1662e-05\\
\hline
0.2&0.1202&0.0097&0.0069&2.3369e-04&6.0067e-05
\\
\hline
0.5&0.1627&	0.0120	&0.0085	&2.6809e-04&	7.0980e-05
\\
\hline
\end{tabular}
\end{table}

%
\begin{figure}[!hbt]
\centering
\subfigure[$q=-0.8$]{
\includegraphics[width=3.5cm]{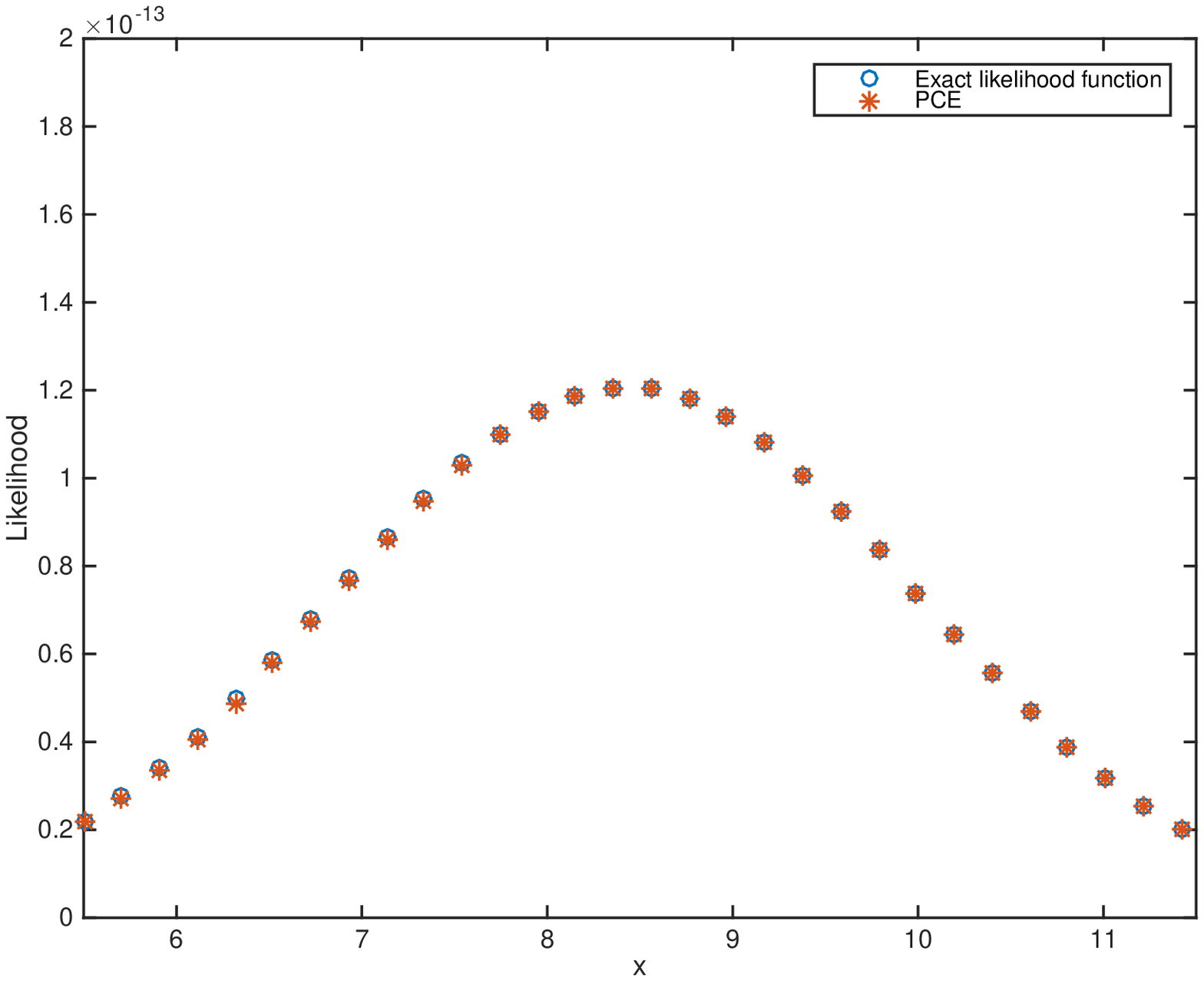}
\includegraphics[width=3.5cm]{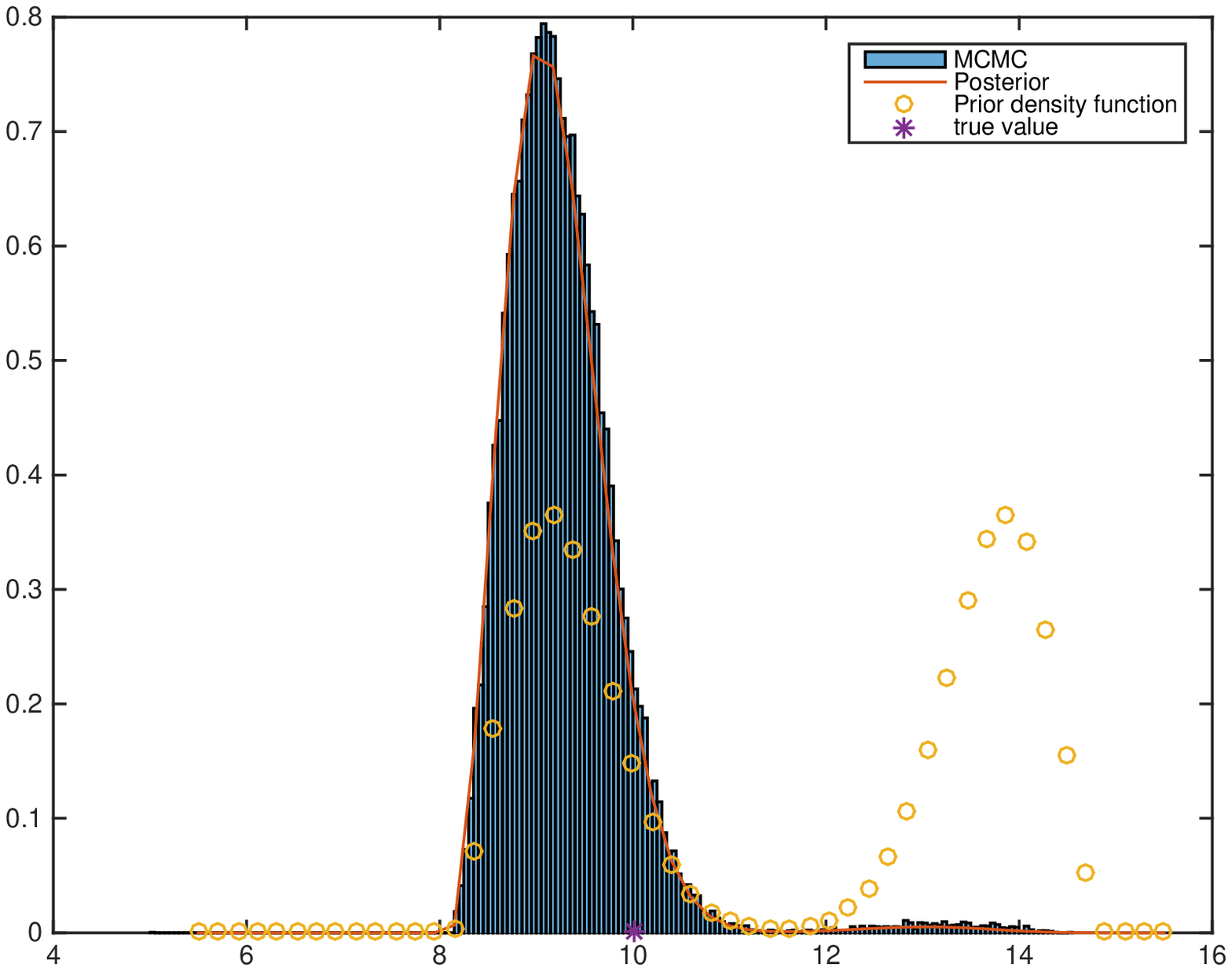}
\includegraphics[width=3.5cm]{pde_n8_19.eps}}
\subfigure[$q=-0.5$]{
\includegraphics[width=3.5cm]{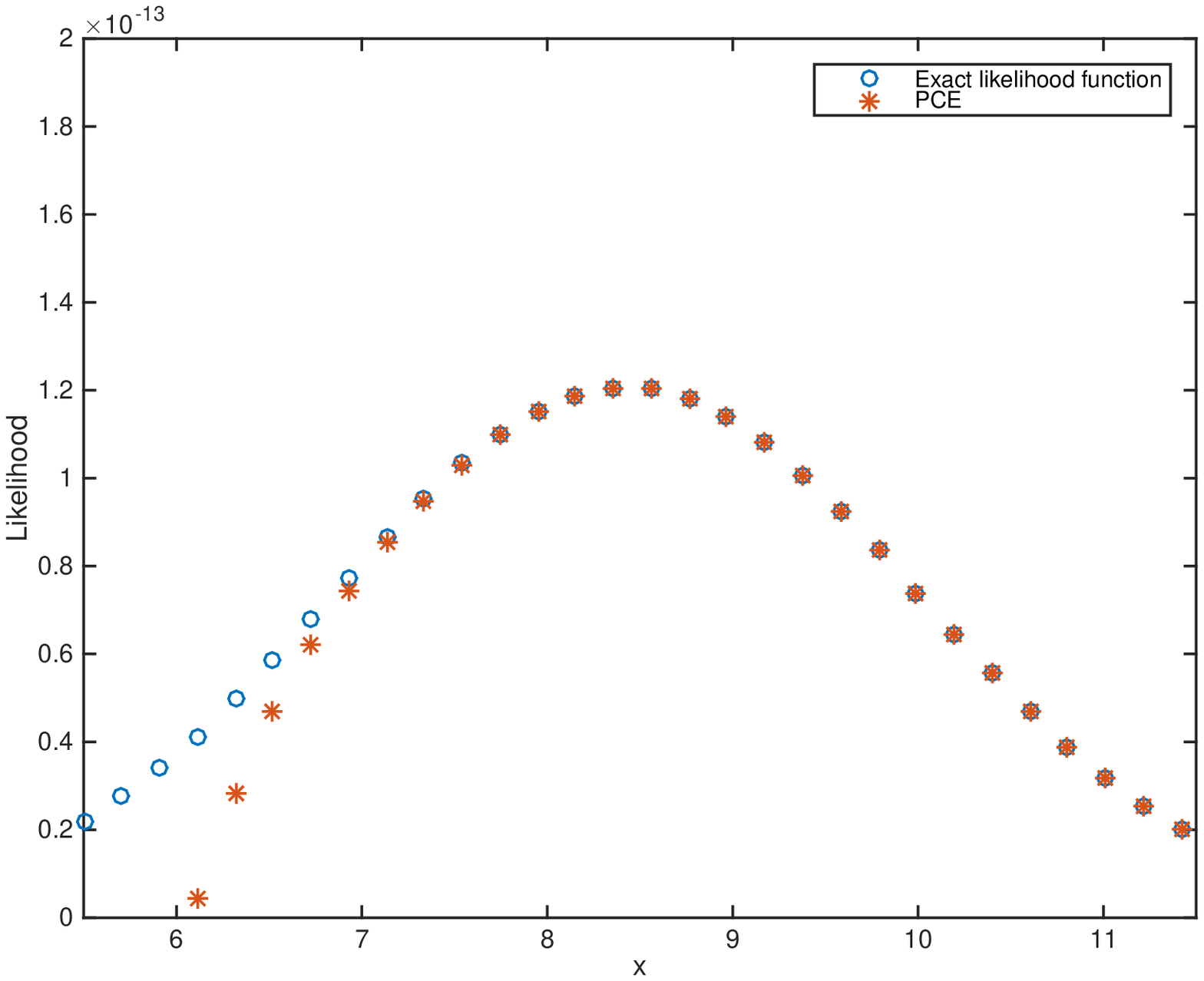}
\includegraphics[width=3.5cm]{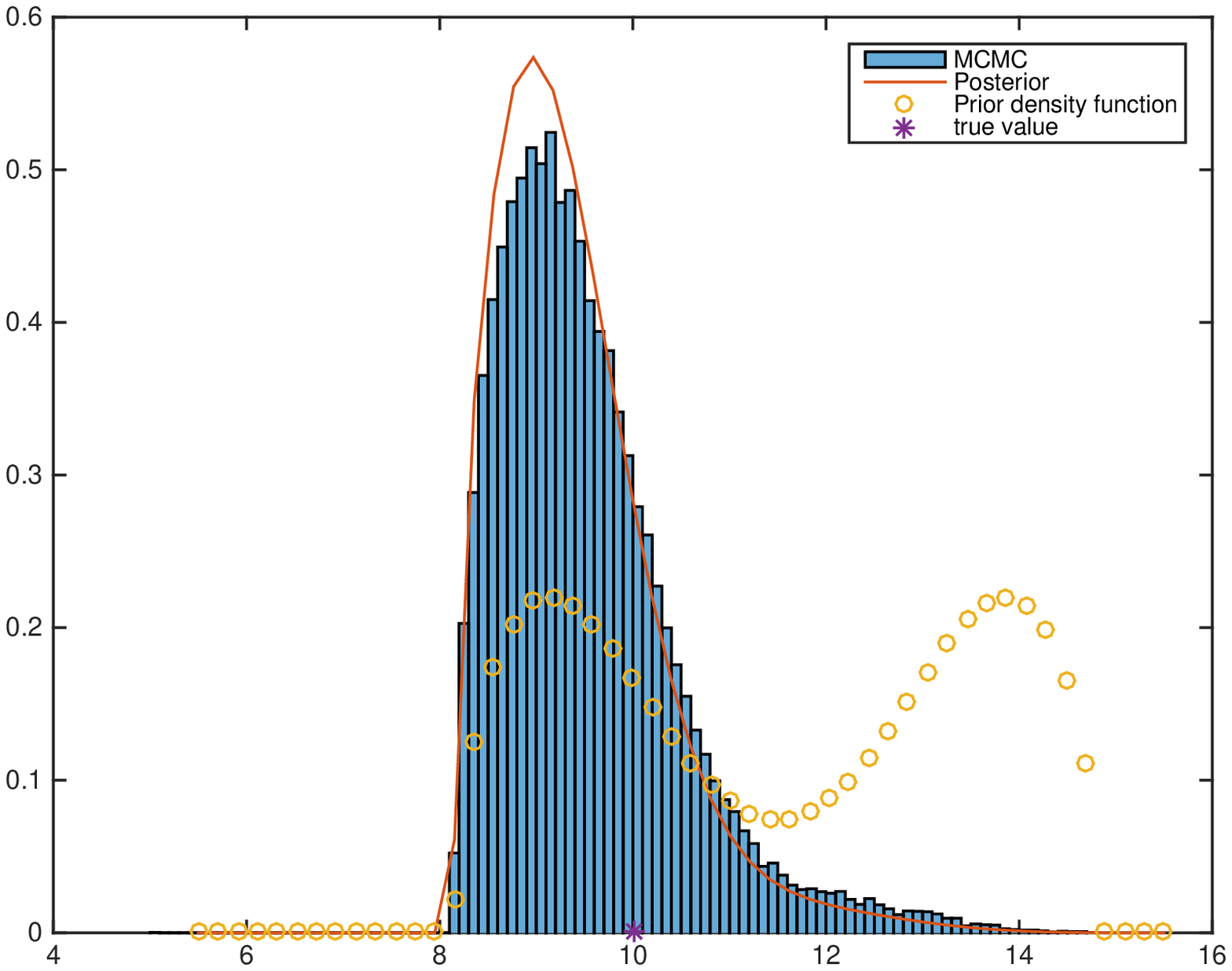}
\includegraphics[width=3cm]{pde_n5.eps}}
\subfigure[$q=-0.2$]{
\includegraphics[width=3.5cm]{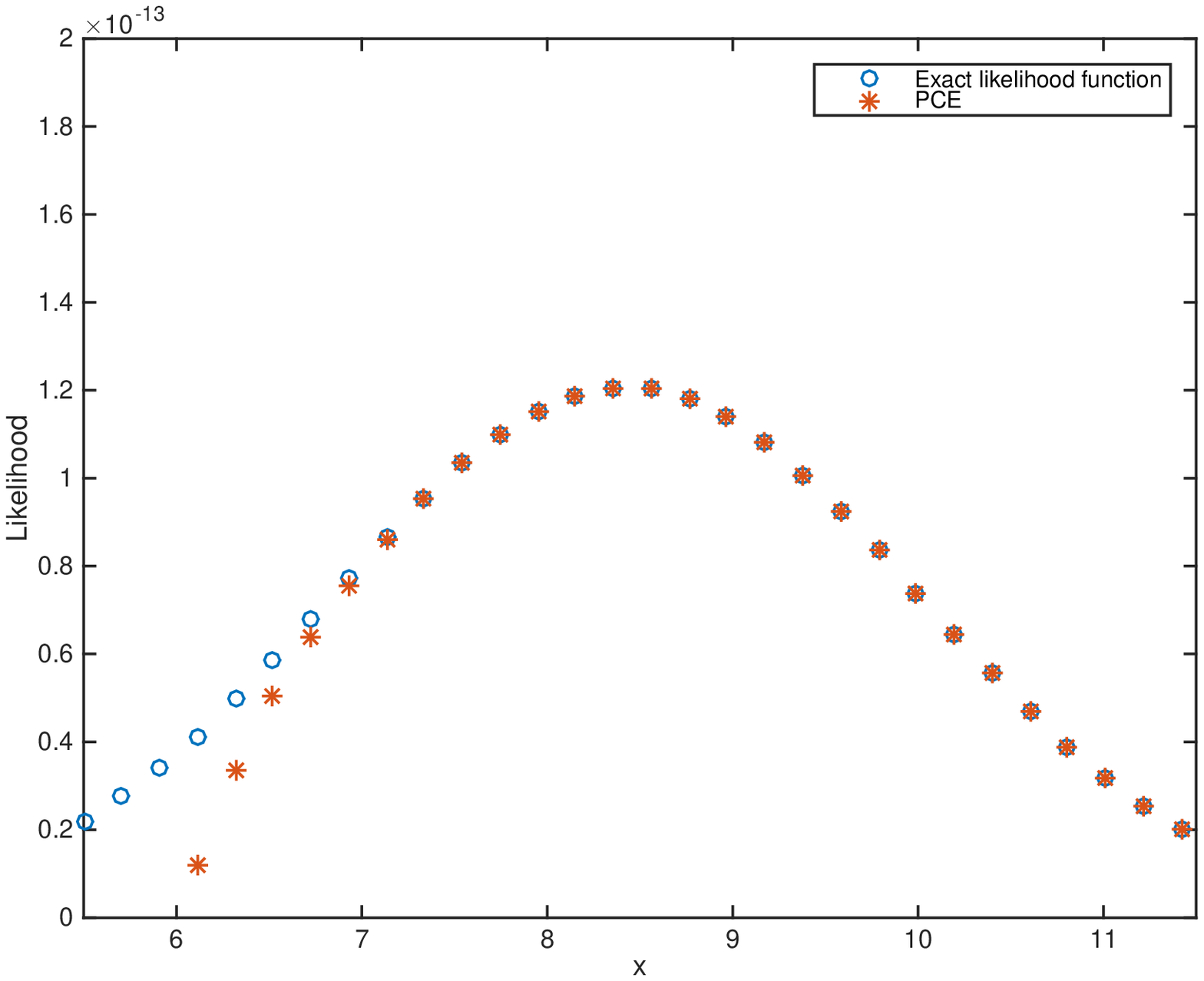}
\includegraphics[width=3.5cm]{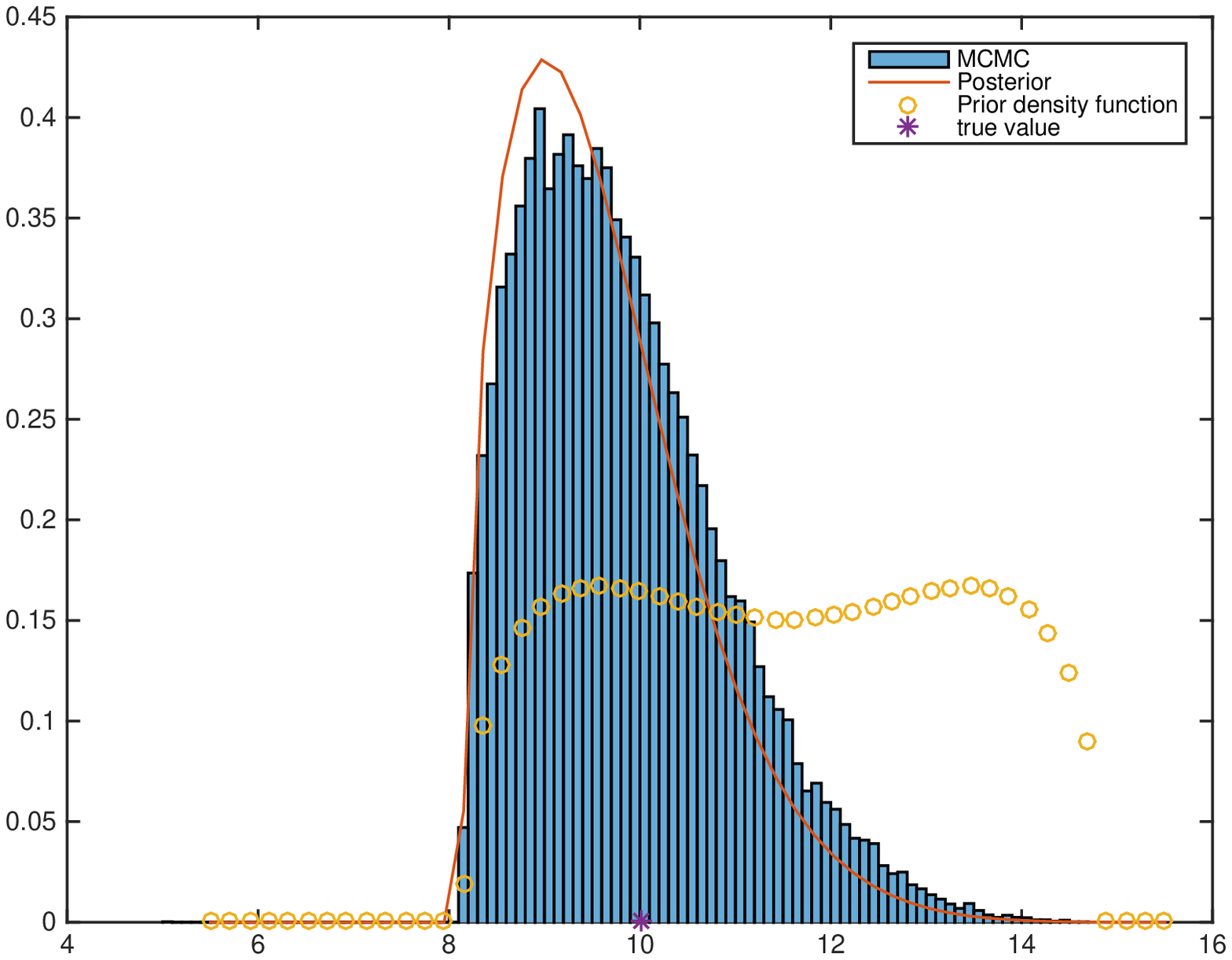}
\includegraphics[width=3.5cm]{pde_n2.eps}}
\subfigure[$q=0$]{
\includegraphics[width=3.5cm]{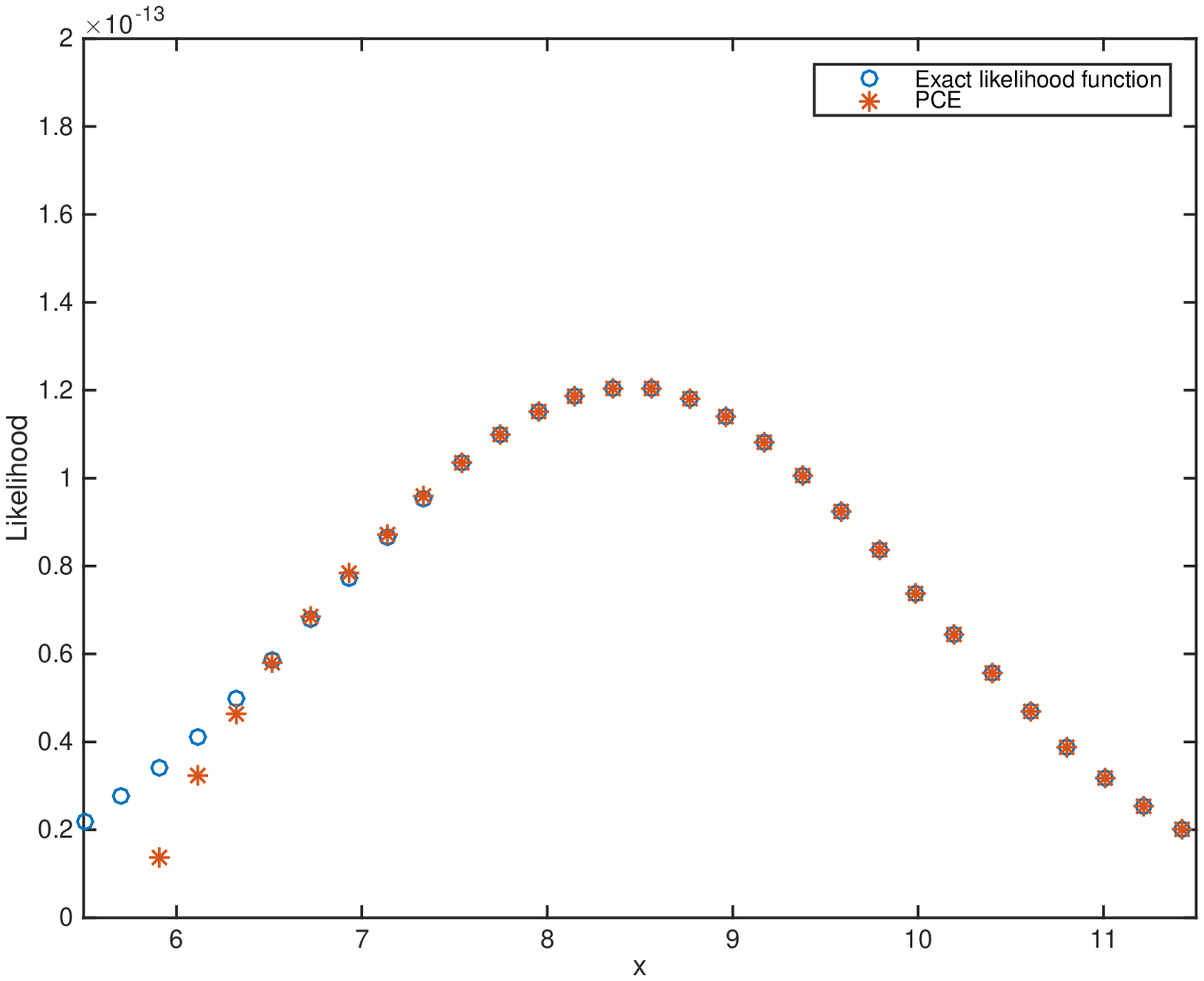}
\includegraphics[width=3.5cm]{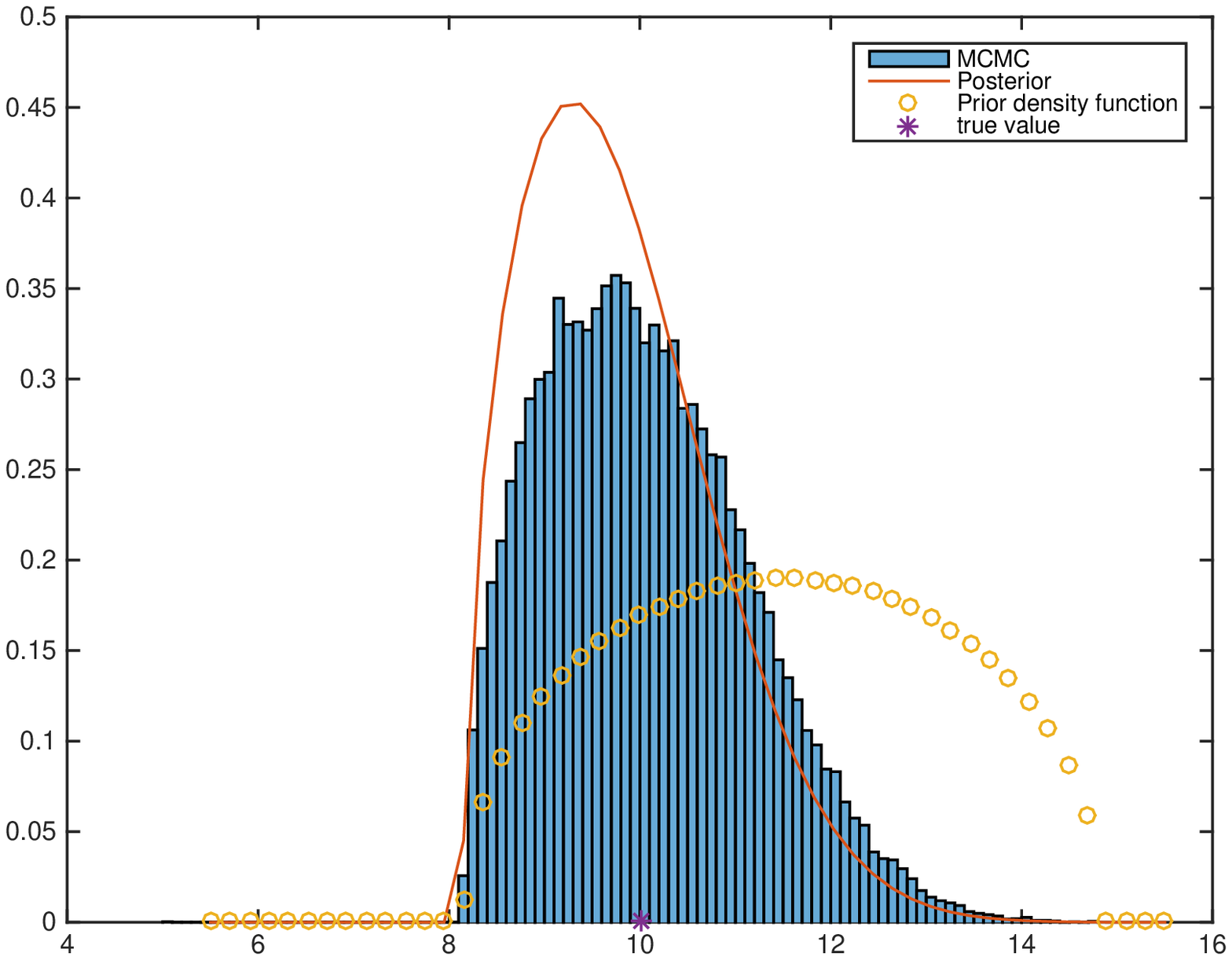}
\includegraphics[width=3.5cm]{pde_0.eps}}
\subfigure[$q=0.2$]{
\includegraphics[width=3.5cm]{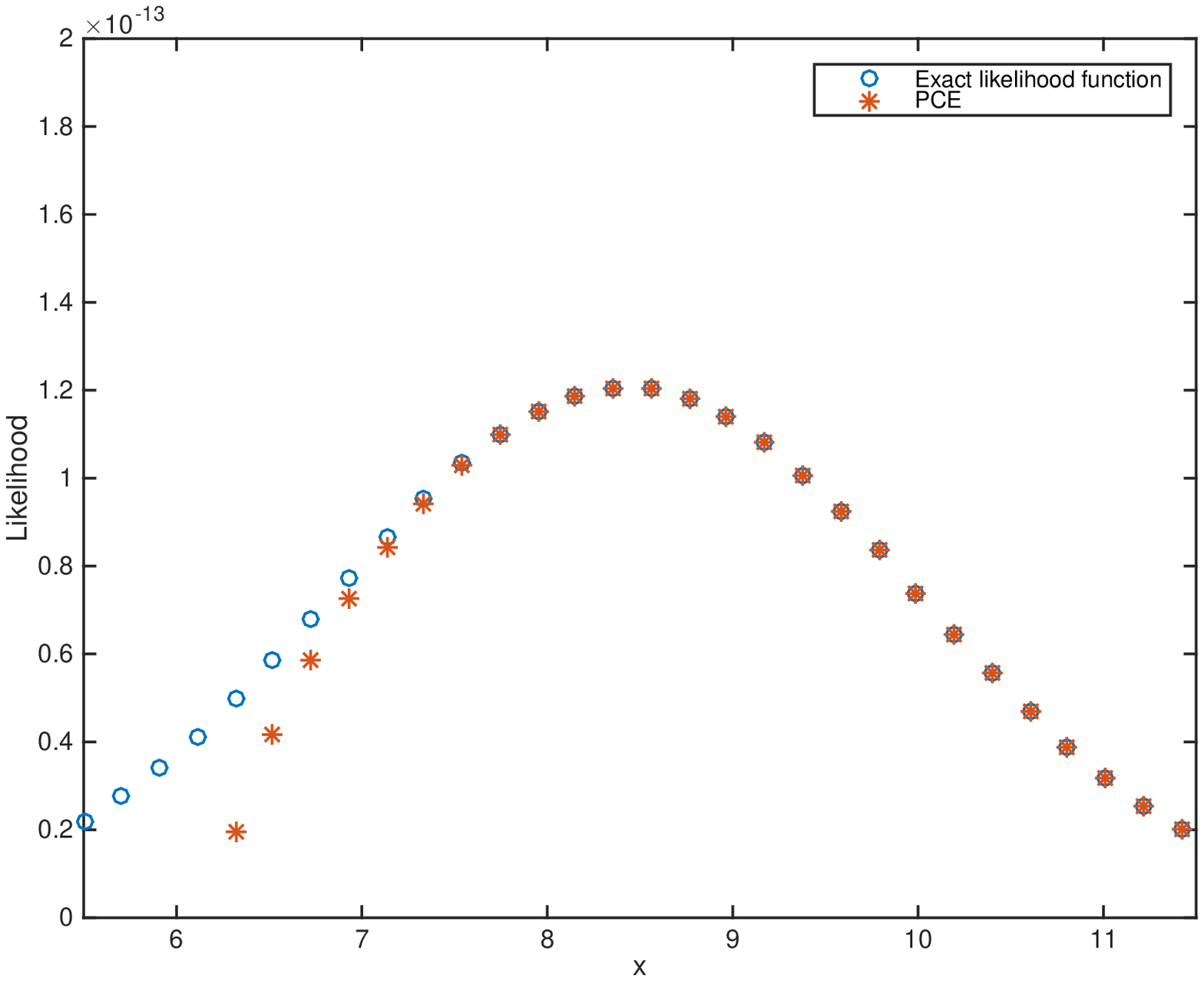}
\includegraphics[width=3.5cm]{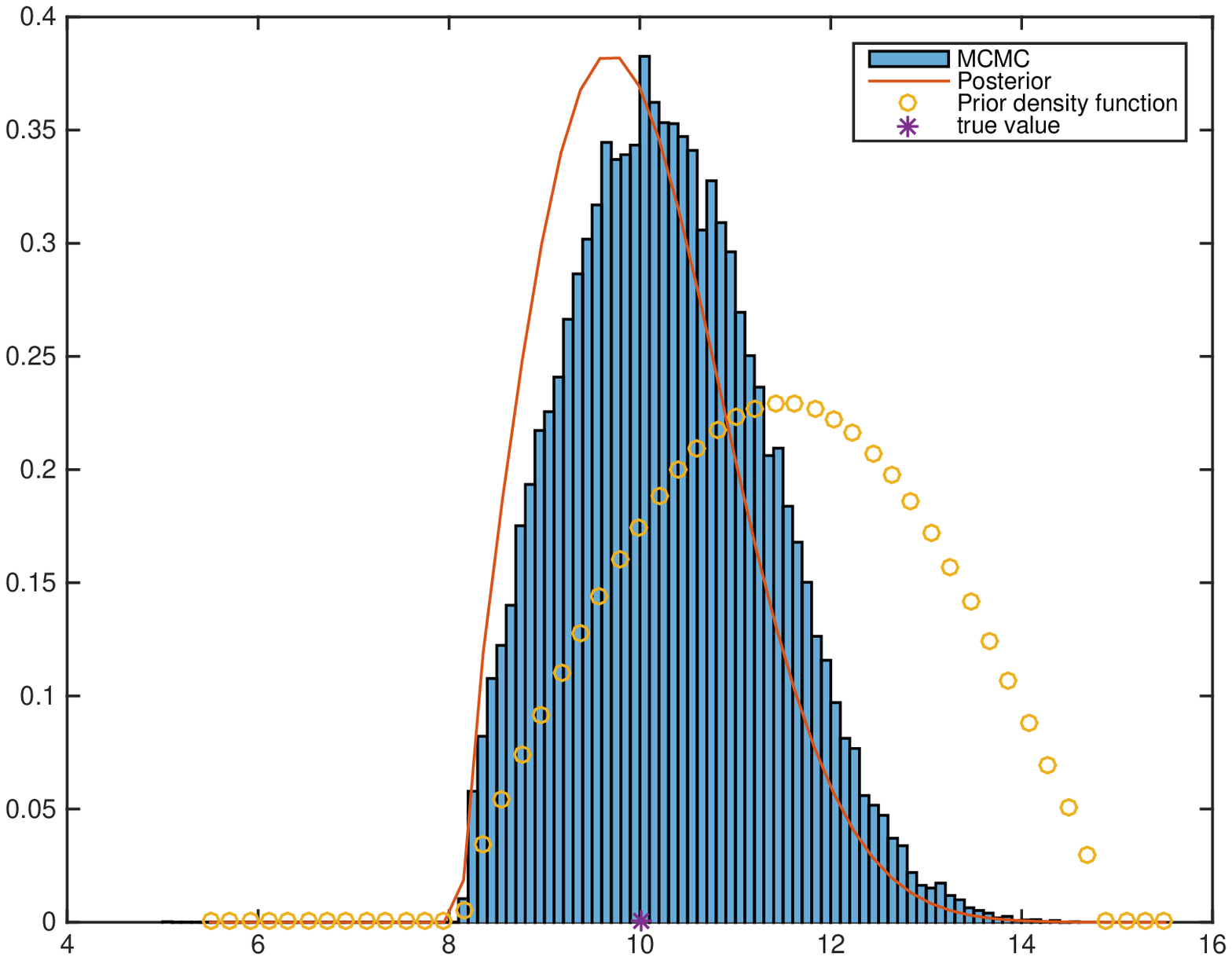}
\includegraphics[width=3.5cm]{pde_p2.eps}}
\subfigure[$q=0.5$]{
\includegraphics[width=3.5cm]{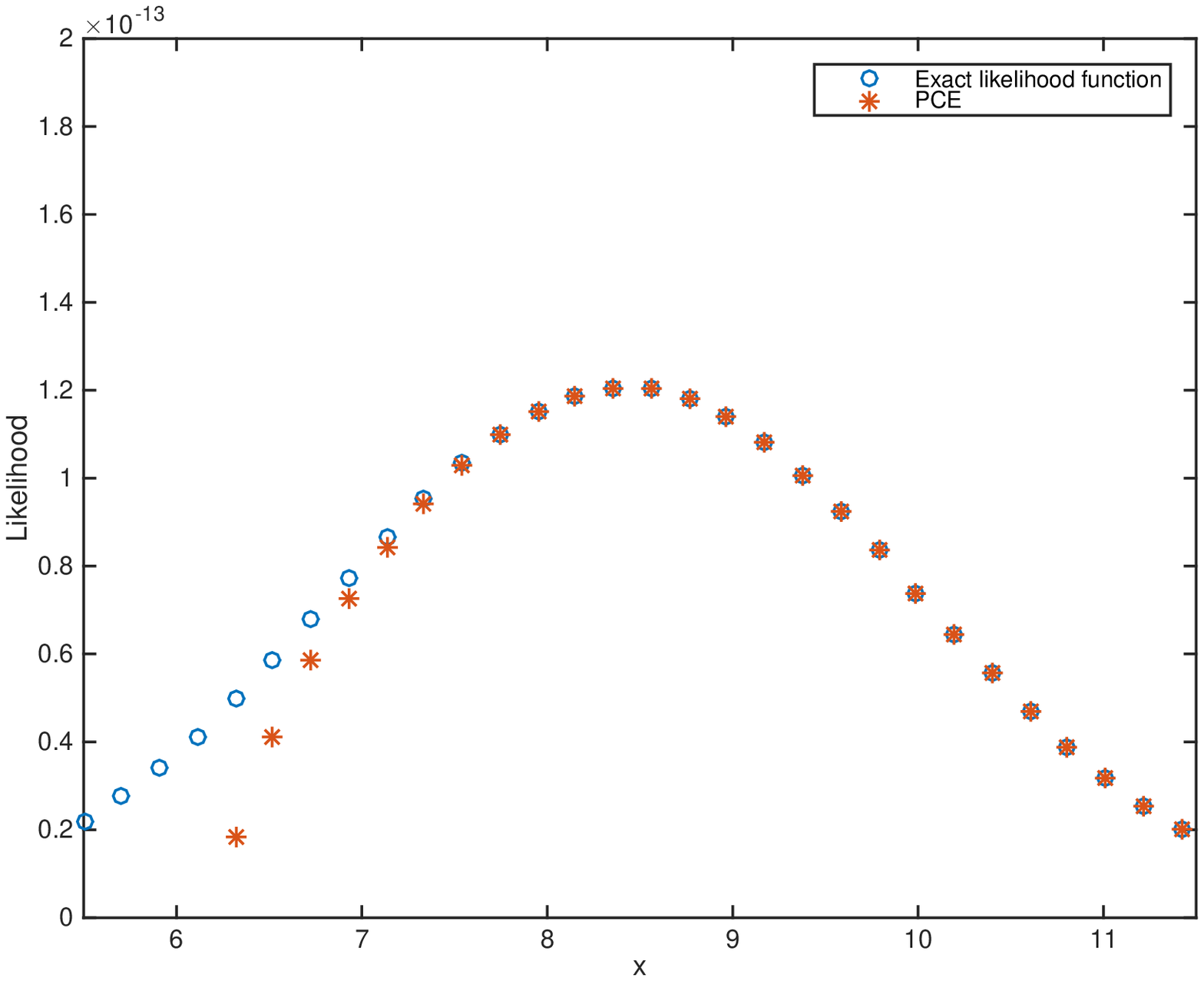}
\includegraphics[width=3.5cm]{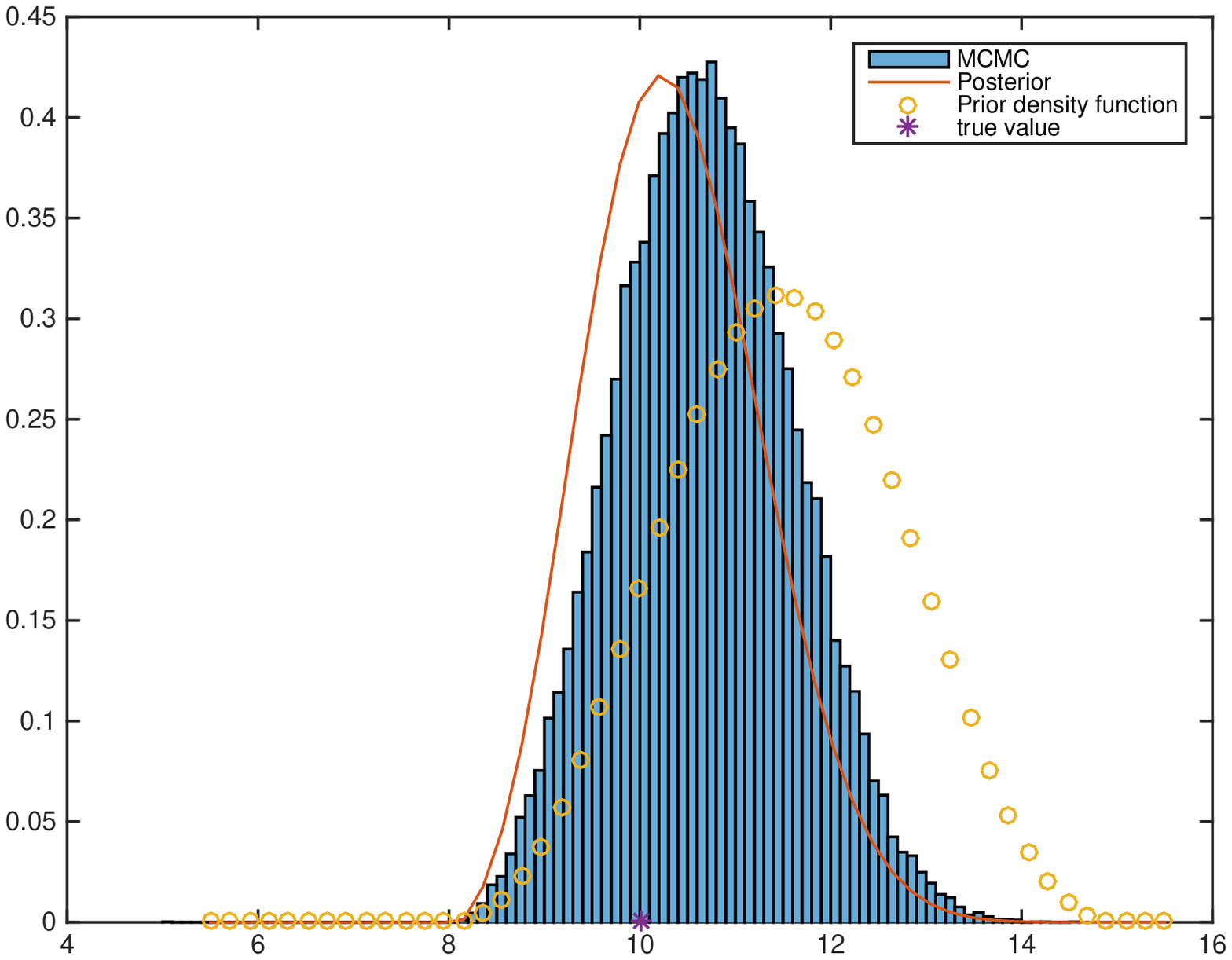}
\includegraphics[width=3.5cm]{pde_p5.eps}}
\caption{Left: Likelihood function and its PCE approximation with $N=12$;  Middle: original model; Right: SLE model}
\label{ex1_1} 
\end{figure}

\subsection{2D problem}
We consider a stationary inverse heat conduction problem governed by  
\begin{align}\label{id5.1}
&-\nabla\cdot(\kappa\nabla u)=0\,\, \text{in}\,\,\Omega\in\mathbb{R}^2,\\
&u\mid_{\gamma_1}=g(x, y),\label{id5.2}\\
&-\kappa_0\frac{\partial u}{\partial\nu}\mid_{\gamma_2}=h(x, y),\label{id5.3}
\end{align}
where $\Omega=\cup_{i=0}^n\Omega_i$ with boundary $\partial\Omega=\gamma_0\cup\gamma_1$, $\kappa$ takes different values $\kappa_i$ at $\Omega_i$ and $g, h$ are the given functions. For the test example, we use the same settings as in \cite{nagel}, where $\Omega$ is a square domain $(x, y)\in (0, 1)\times(0, 0.6)$ and $\kappa_0=15, \kappa_1=32, \kappa_2=28$. The subdomains $\Omega_1, \Omega_2$ are disks located at $(0.3, 0.3), (0.7, 0.3)$ with radius $0.1$ respectively.  The Dirichlet boundary condition $g(x, 0.6)=200$ and Neumann boundary conditions $h(0, y)=0, h(1, y)=0, h(x, 0)=2000$. The forward problem is to find the solution $u$ for given $\kappa_1, \kappa_2$. The inverse heat conduction problem is to seek $\kappa_1, \kappa_2$ by measurements of $u$ at some fixed points in domain $\Omega$. The domain settings are displayed in Fig. \ref{fig:1}.
The forward problem is solved in linear finite element method and we show the finite element solution in Fig. \ref{fig:2}. The data is acquired at $10$ scattered points distributed in the domain $\Omega$ by adding absolute error to the numerical solution of the forward problem, i.e.,
\begin{align*}
u(\vec{x}, \vec{y})=u^\dag(\vec{x}, \vec{y})+\delta*\text{randn}(10, 1),
\end{align*}
where $(\vec{x}, \vec{y})$ is the 10 measure point position vector. We display several numerical effects in Fig. \ref{fig:3} for different $q$ and noise $\delta$. The results show that when $\delta$ is smaller, the peak of posterior density is closer to the true value. And the MCMC samples can reflect the posterior density well. However, there exists some differences  away from the peak between the posterior density with the SLE and the original model. This also leads to some samples do not concentrate close to the peak. 

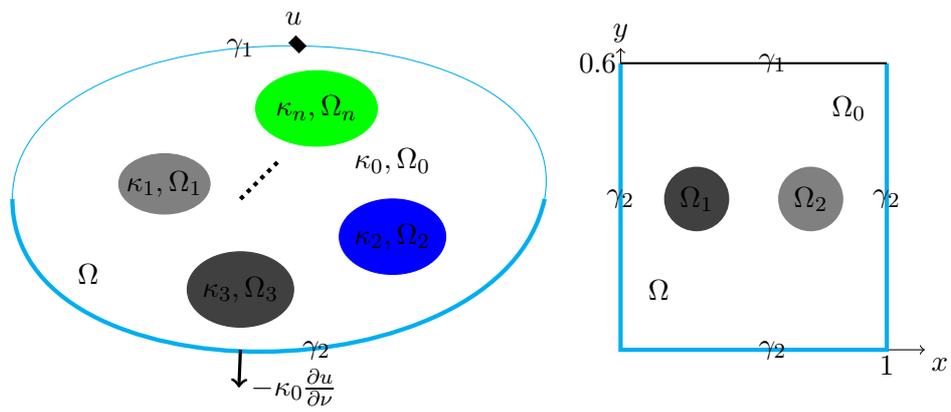
\begin{figure}[!hbt]
\begin{tikzpicture}[xscale=1,yscale=1]
\draw [black, cyan] (0,0) to [out=90,in=80] (7,0);
\draw [black,ultra thick, cyan]  (7,0) to [out=-102,in=-90] (0,0);
\draw [fill, color=gray] (2,0.2) ellipse [x radius=0.6cm, y radius=0.4cm];
\draw [fill, color=blue] (5,-0.5) ellipse [x radius=0.7cm, y radius=0.5cm];
\draw [fill, color=green] (4,1.2) ellipse [x radius=0.8cm, y radius=0.5cm];
\draw [fill, color=darkgray] (3,-1.2) ellipse [x radius=0.7cm, y radius=0.5cm];
\node at (5,0.5) {$\kappa_0, \Omega_0$};
\node at (2,0.2) {$\kappa_1, \Omega_1$};
\node at (5,-0.5) {$\kappa_2, \Omega_2$};
\node at (3,-1.2) {$\kappa_3, \Omega_3$};
\node at (4,1.2) {$\kappa_n, \Omega_n$};
\draw[black, very thick,->] (3,-2)--(2.98,-2.5);
\node at (3.7,-2.5) {$-\kappa_0\frac{\partial u}{\partial \nu}$};
\node at (3.7,2.4) {$u$};
\draw [line width=0.2cm] (3.7,2) -- (3.8,2.1);
\draw [dotted, ultra thick] (3,0) -- (3.5,0.5);
\node at (3, 2) {$\gamma_1$};
\node at (4, -2) {$\gamma_2$};
\node at (1, -1) {$\Omega$};
\draw [<->] (12,-2) -- (8,-2) -- (8,2);
\draw [ultra thick, cyan] (8, 1.8) -- (8, -2) -- (11.5,-2) -- (11.5, 1.8); 
\draw [thick] (8, 1.8) -- (11.5, 1.8);
\draw [darkgray, ultra thick,fill] (9,0) circle [radius=0.4];
\draw [gray, ultra thick,fill] (10.5,0) circle [radius=0.4];
\node at (8.5, -1.2) {$\Omega$};
\node at (11, 1.2) {$\Omega_0$};
\node at (9, 0) {$\Omega_1$};
\node at (10.5, 0) {$\Omega_2$};
\node at (10,-2) {$\gamma_2$};
\node at (11.5,0) {$\gamma_2$};
\node at (8,0) {$\gamma_2$};
\node at (10,1.8) {$\gamma_1$};
\node at (12.2, -2.2) {$x$};
\node at (11.5, -2.2) {$1$};
\node at (8, 2.2) {$y$};
\node at (7.7, 1.8) {$0.6$};
\end{tikzpicture}
\caption{Heat conduction setup.}
\label{fig:1}
\end{figure}

\begin{figure}[!hbt]
\centering
\includegraphics[width=6cm]{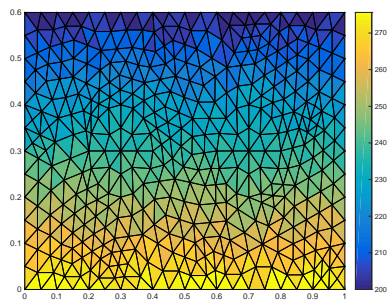}
\caption{The finite element solution.}
\label{fig:2} 
\end{figure}

\begin{figure}[!hbt]
\centering
\subfigure[$q=-0.8, \delta=0.1$]{
\includegraphics[width=6cm]{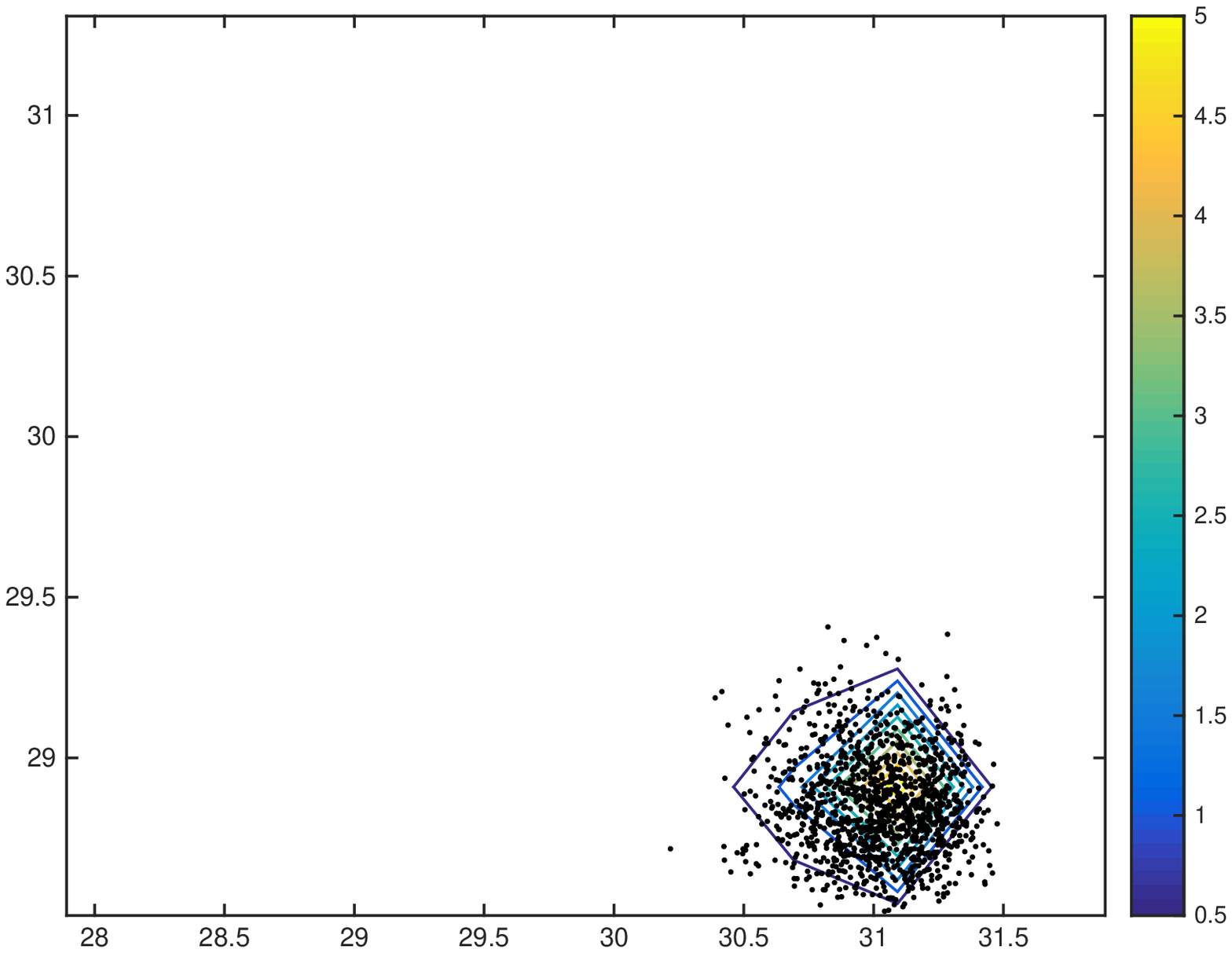}
\includegraphics[width=6cm]{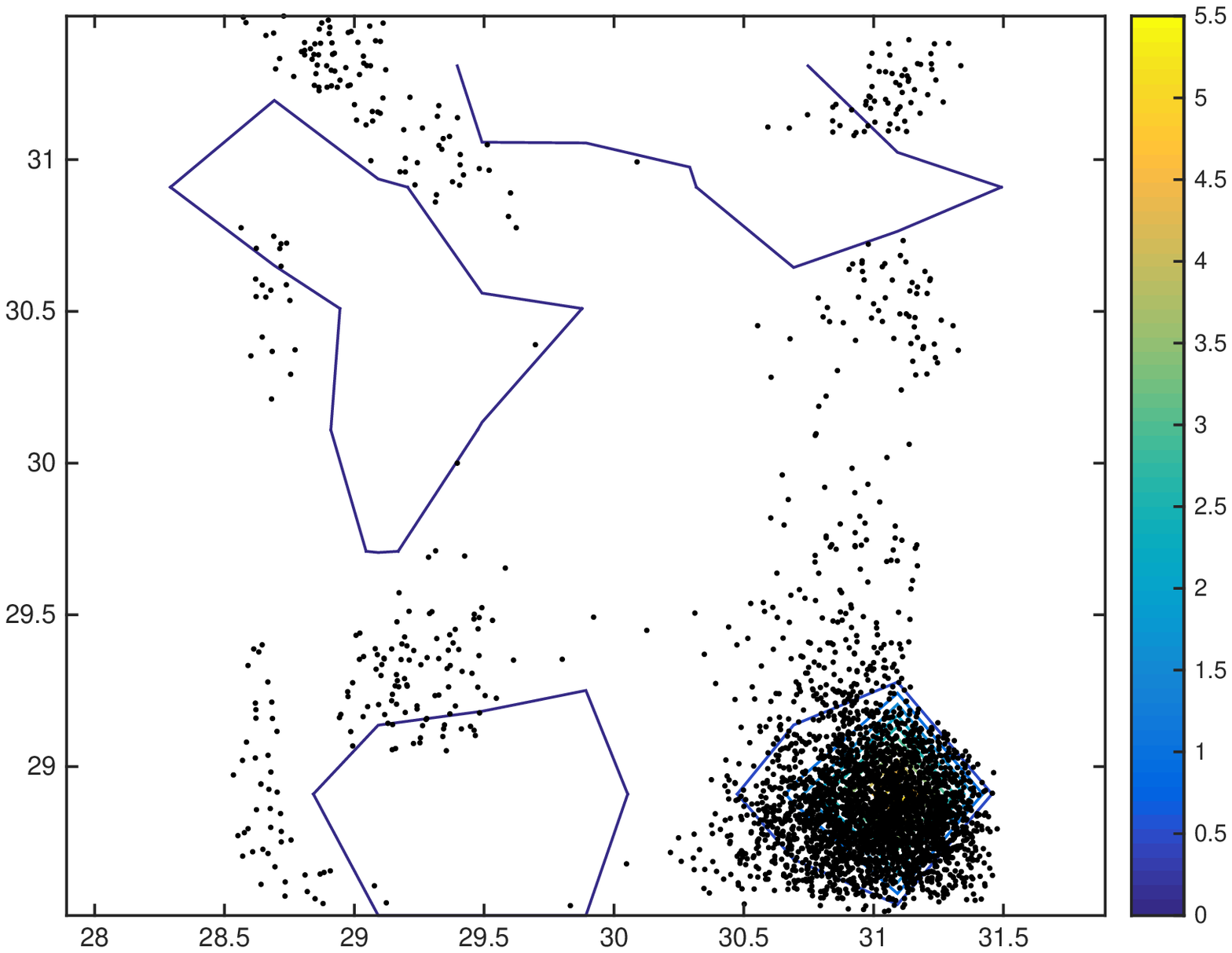}}
\subfigure[$q=0.5, \delta=1$]{
\includegraphics[width=6cm]{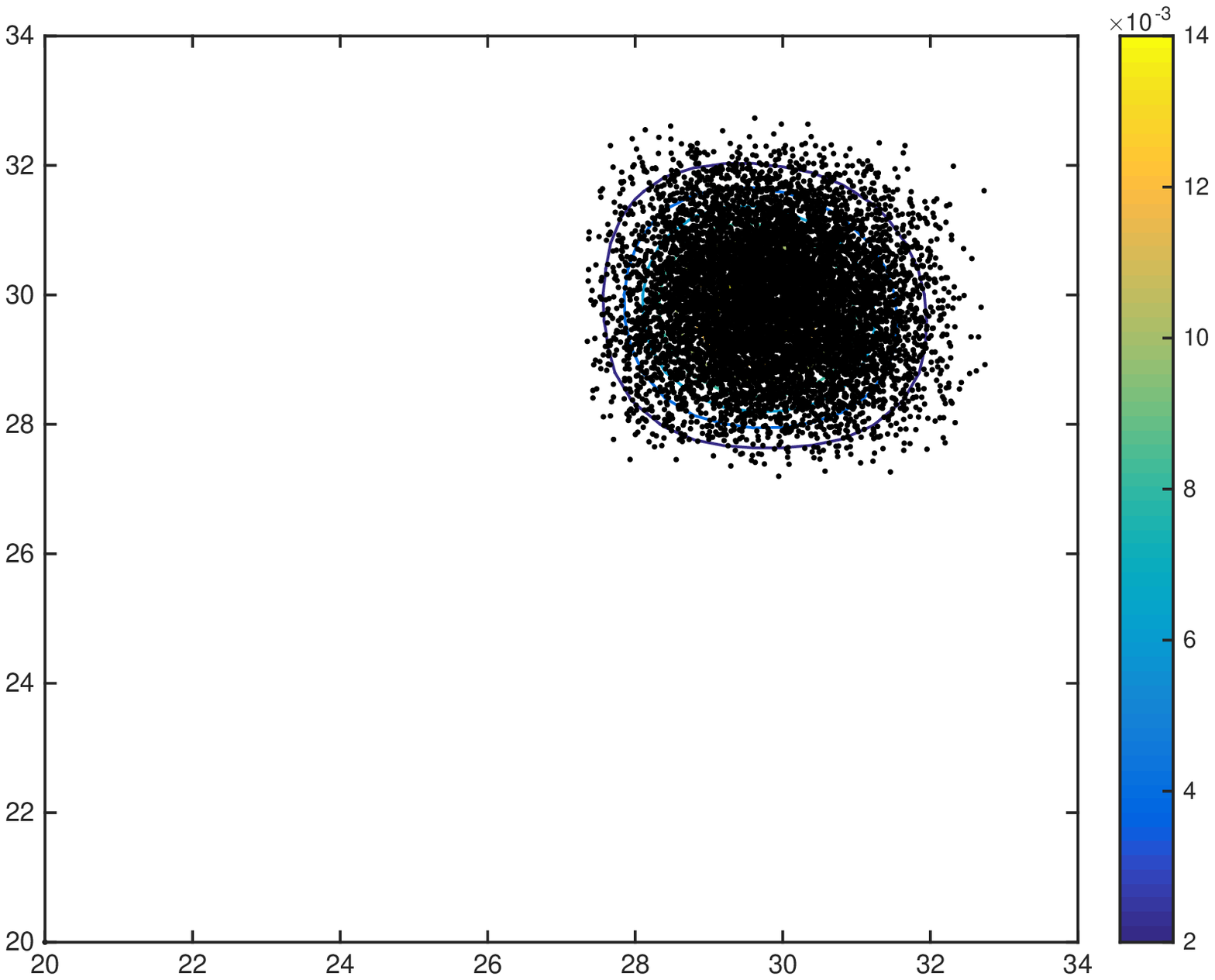}
\includegraphics[width=6cm]{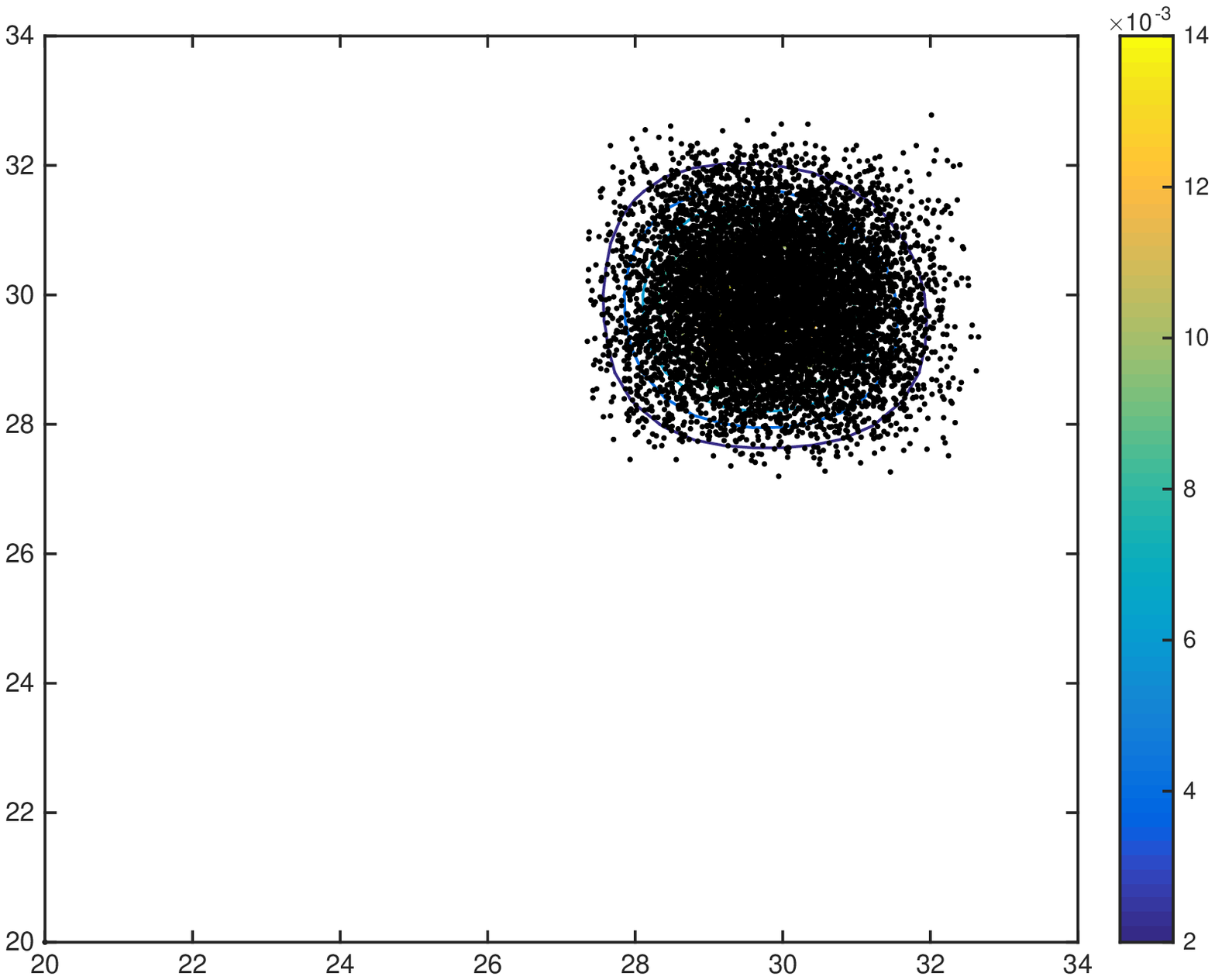}}
\subfigure[$q=0.5, \delta=0.5$]{
\includegraphics[width=6cm]{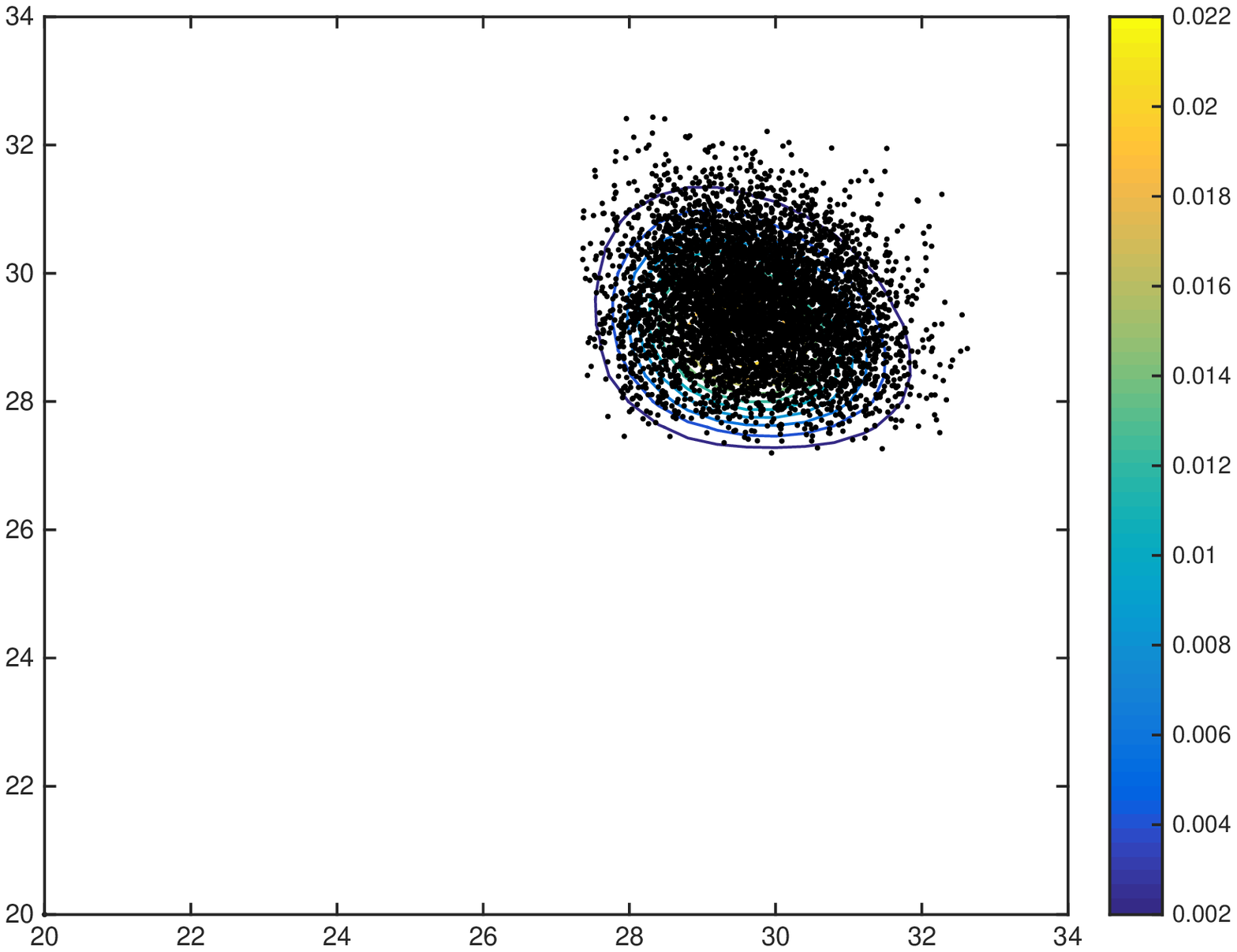}
\includegraphics[width=6cm]{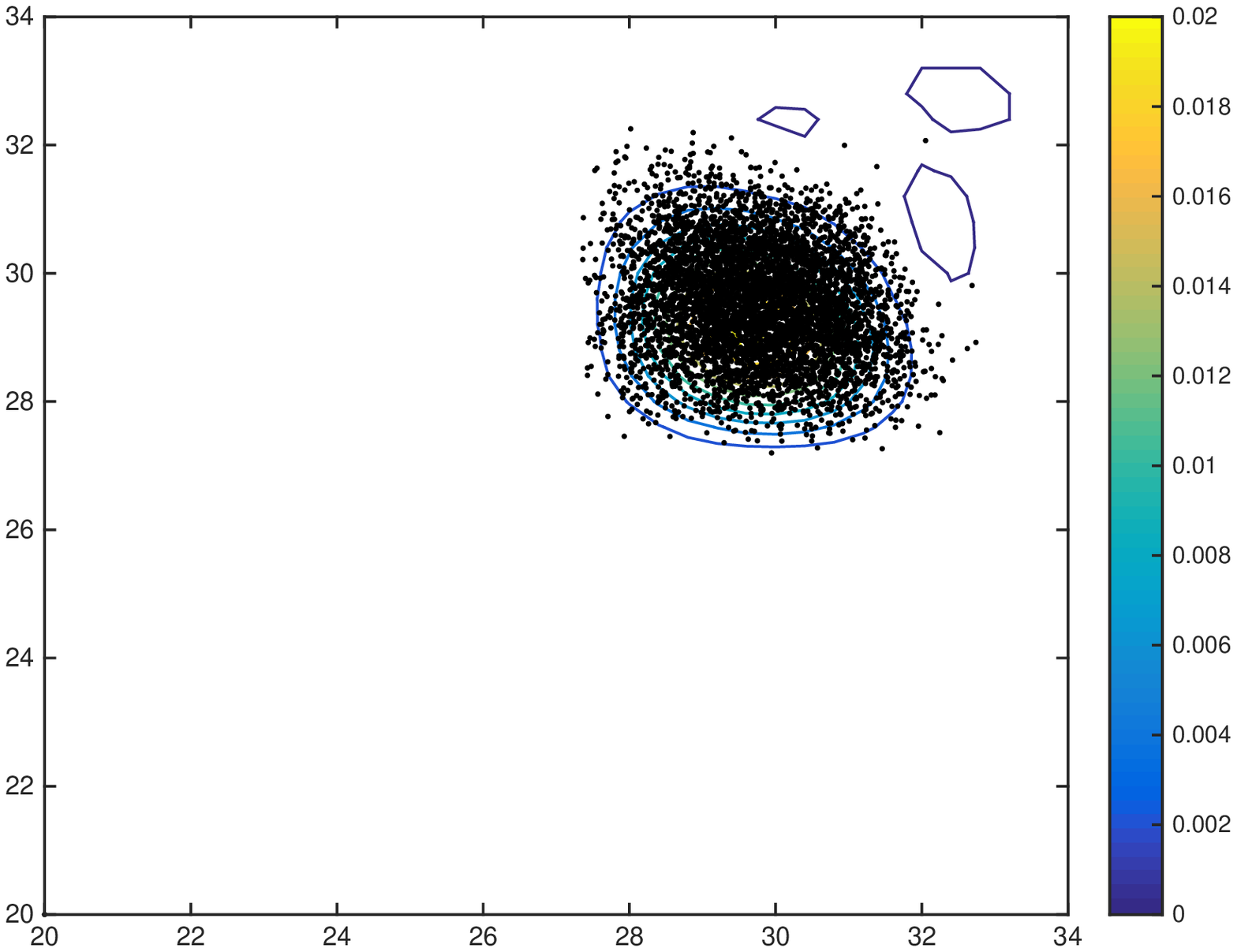}}
\caption{The contour of posterior density and MCMC samples. Left: Original model; Right: The SLE model}
\label{fig:3} 
\end{figure}




 \section*{Conclusions}


We have introduced a new prior model, namely,  q-Gaussian prior, 
into the research of inverse problems, which is q-analogue of classical Gaussian distribution.  Since the density function of each q-Gaussian distribution has compact support, we can characterize some bounded physical parameters using it. In order to accelerate the computation of MCMC sampling in Bayesian inversion, we adopted a spectral likelihood approximation algorithm based on q-Hermite polynomial chaos expansion of likelihood function. Then we proved the convergence of posterior distribution in the framework of relative entropy when the likelihood function is replaced with truncated PCE. And when the q-Gaussian prior is approximated, we also studied the convergence of the corresponding approximated posterior measure. With the proposed prior and SLE algorithm, we  verified the effectiveness of the proposed method  through two numerical examples . 


\end{document}